\title{A full-twist inequality for the $\nu^+$-invariant}
\author{Kouki Sato}
\date{}
\newtheorem{thm}{Theorem}[section]
\newtheorem{prop}[thm]{Proposition}
\newtheorem{lem}[thm]{Lemma}
\newtheorem{cor}[thm]{Corollary}
\newtheorem{claim}{Claim}
\theoremstyle{definition}
\newtheorem*{dfn}{Definition}
\newtheorem*{remark}{Remark}
\newtheorem*{acknowledge}{Acknowledgements}
\DeclareMathOperator{\punc}{punc}
\DeclareMathOperator{\Int}{Int}
\DeclareMathOperator{\lk}{lk}
\DeclareMathOperator{\Cont}{Cont}
\begin{document}
\maketitle

\begin{abstract}
Hom and Wu introduced a knot concordance invariant called $\nu^+$, which dominates many concordance invariants derived from Heegaard Floer homology. In this paper, we give a full-twist inequality for $\nu^+$. By using the inequality, we extend Wu's 
cabling formula for $\nu^+$ (which is proved only for particular positive cables) to all cables in the form of an inequality. 
In addition, we also discuss $\nu^+$-equivalence, which is an equivalence relation on the knot concordance group. We introduce a partial order on $\nu^+$-equivalence classes, and study its relationship to full-twists.
\end{abstract}

\section{Introduction}
\subsection{Full-twist inequality for $\nu^+$-invariant}

The $\nu^+$-invariant is a non-negative integer valued knot concordance invariant defined by Hom and Wu \cite{hom-wu}.
The $\nu^+$-invariant dominates many concordance invariants derived from Heegaard Floer homology, in terms of obstructions to sliceness.
In fact, Hom proves in \cite{hom} that for a given knot $K$ in $S^3$, 
if both $\nu^+(K)$ and $\nu^+(-K)$ are zero, then all invariants $\tau$, $\nu$,
$V_k$, $\gamma$, $\varepsilon$, $d(S^3_{p/q}(-),i)$ and $\Upsilon(t)$ agree with their values on the unknot.
(Here $-K$ denotes the orientation reversed mirror of $K$.)
Hence the $\nu^+$-invariant plays a special role among knot concordance invariants derived from Heegaard Floer homology.

In this paper, we give a full-twist inequality for the $\nu^+$-invariant.
To state the inequality, we first describe {\it full-twist operations}. 
Let $K$ be a knot in $S^3$ and $D$ a disk in $S^3$ which intersects $K$ in its interior.
By performing $(-1)$-surgery along $\partial D$, we obtain a new knot $J$ in $S^3$ from $K$.
Let $n = \lk(K, \partial D)$. Since reversing the orientation of $D$ does not affect the result, we may assume that $n\geq0$. Then we say that $K$ is deformed into $J$ by 
{\it a positive full-twist with $n$-linking}, and call such an operation a {\it full-twist operation}. The main theorem of this paper is stated as follows. 
(Here $\#$ denotes connected sum.)
\begin{thm}
\label{thm1}
Suppose that a knot $K$ is deformed into a knot $J$ by a positive full-twist with $n$-linking. If $n=0$, then $\nu^+(J\#(-K))=0$. Otherwise, we have
$$
\frac{(n-1)(n-2)}{2} \leq \nu^+(J\#(-K)) \leq \frac{n(n-1)}{2}.
$$
\end{thm}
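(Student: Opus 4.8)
The plan is to recast the full-twist operation in dimension four. Write $c=\partial D$; this is an unknot in $S^{3}\setminus K$ with $\lk(K,c)=n\ge 0$, and $J$ is the image of $K$ in $S^{3}_{-1}(c)\cong S^{3}$. Attaching a $2$-handle to $S^{3}\times[0,1]$ along $c\times\{1\}$ with framing $-1$ produces a cobordism $W\colon S^{3}\to S^{3}$ that is negative definite with $b_{2}(W)=1$; a generator of $H_{2}(W)$ is carried by a smoothly embedded sphere $S$ (the core of the $2$-handle capped off by $D$) with $[S]^{2}=-1$. Since the handle is attached away from $K\times[0,1]$, the product annulus $A=K\times[0,1]$ sits in $W$ as a genus-$0$ cobordism from $K$ to $J$ with $[A]\cdot[S]=D\cdot K=n$. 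This annulus in this negative-definite $4$-manifold is the only geometric input; everything after it is a matter of converting it into statements about $\nu^{+}$.

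Consider first $n=0$. Then $[A]\cdot[S]=0$, so $[A]=0$ in $H_{2}(W,\partial W)$, and a standard splitting peels off the $\overline{\mathbb{CP}}{}^{2}$-summand carrying $S$, exhibiting $K$ and $J$ as cobounding an annulus inside a $\mathbb{Z}$-homology cobordism. Hence $K$ and $J$ are $\nu^{+}$-equivalent, so $J\#(-K)$ is $\nu^{+}$-equivalent to the unknot and $\nu^{+}(J\#(-K))=0$.

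For the upper bound when $n\ge 1$ I would feed $A$ into the cobordism form of the slice-genus inequality for $\nu^{+}$: if $K_{0},K_{1}$ cobound a connected genus-$g$ surface $F$ in a cobordism $W\colon S^{3}\to S^{3}$ with $b_{2}^{+}(W)=0$, then $\nu^{+}(K_{1}\#(-K_{0}))$ is bounded above by $g$ plus a correction term assembled from $[F]$, $b_{2}(W)$, and a $\mathrm{spin}^{c}$ structure on $W$, after minimizing over the $\mathrm{spin}^{c}$ structures. The key feature is that this bound sees only the \emph{homology class} $[A]$ — i.e.\ only $n$ — and not the geometric intersection number $D\cdot K$, which may be much larger and which one cannot in general reduce (a Whitehead-type obstruction); so the possible excess causes no trouble here. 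In our situation $g=0$ and $[A]$ is $n$ times the class dual to $S$, so the content of the upper bound is to carry out the $\mathrm{spin}^{c}$ minimization and check that the correction term collapses to exactly $\tfrac{n(n-1)}{2}$ (and not to something larger). This yields $\nu^{+}(J\#(-K))\le\tfrac{n(n-1)}{2}$.

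The lower bound is, I expect, the genuinely difficult half. The reverse surgery — $+1$ along the dual unknot $c'\subset S^{3}_{-1}(c)$, which recovers $(S^{3},K)$ from $(S^{3},J)$ — gives a \emph{positive}-definite cobordism, for which the inequality above runs the wrong way. The natural plan is a dual argument: either a minimal-genus bound of Thom-conjecture type for the surface representing $n$ times a square-$(+1)$ class in that positive-definite cobordism (combined with $\nu^{+}\le g_{4}$, to constrain the slice genus — hence $\nu^{+}$ — of its boundary from below), or a direct lower bound on the correction terms $V_{s}(J\#(-K))$ for $s<\tfrac{(n-1)(n-2)}{2}$ extracted from the mapping-cone surgery formula for the $(-1)$-surgery along $c$. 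The crux in either case is to produce precisely the constant $\tfrac{(n-1)(n-2)}{2}$ and to reconcile it against the positive definiteness; as a sanity check it is saturated in the model case where $K$ is an unknot wound $n$ times around $c$, for which $J\#(-K)=T(n-1,n)$ and $\nu^{+}(T(n-1,n))=\tfrac{(n-1)(n-2)}{2}$.
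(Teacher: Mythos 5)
The geometric setup you describe — the disk/annulus $A$ in the negative-definite cobordism built by attaching a $(-1)$-framed two-handle along $\partial D$, which is equivalently the pair $(\punc\overline{\mathbb{C}P^2}, D'')$ the paper uses — is exactly the paper's starting point. The difficulty is that everything after that setup in your proposal is either a plan or a gap.

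Your $n=0$ argument has a genuine error. From $[A]\cdot[S]=0$ you conclude $[A]=0$, which is fine, but then you claim ``a standard splitting peels off the $\overline{\mathbb{C}P}{}^{2}$-summand carrying $S$, exhibiting $K$ and $J$ as cobounding an annulus inside a $\mathbb{Z}$-homology cobordism.'' That does not follow: the disk $D$ can still geometrically meet $K$ many times even when $\lk(K,\partial D)=0$ (this is precisely the Whitehead-type phenomenon you yourself invoke later to explain why homology alone matters), so the annulus $A$ need not be disjoint from the sphere $S$ and you cannot split the blow-up off to one side of it. The paper handles $n=0$ by citing a nontrivial theorem (Sato's result that a knot bounding a null-homologous disk in a negative-definite four-manifold with boundary $S^3$ has $V_0=0$); it is not a homological triviality.

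For $n\geq1$, your upper bound is a plan rather than a proof — you defer the actual $\Spin^c$ minimization — and the lower bound you explicitly leave open, offering two possible directions (a Thom-conjecture-type bound in the dual positive-definite cobordism, or a surgery mapping-cone computation) without carrying either out. You also describe the two bounds as coming from essentially different mechanisms, with the lower one the harder. The paper's argument is structurally different and resolves both bounds simultaneously from a single computation: it shows $S^3_{n^2}(J\#(-K))$ bounds a rational homology ball (the exterior of the handle), applies Owens--Strle to see that exactly $n$ of its correction terms vanish, then determines arithmetically (by analysing when $d(S^3_{n^2}(O),i)$ is an even integer) exactly which Spin$^c$ structures those are. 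Feeding this into Ni--Wu's formula $d(S^3_{n^2}(K),i) = f_{n^2}(i) - 2V_i(K)$ pins down $V_{nj}(J\#(-K))$ (resp.\ $V_{n/2+nj}$) exactly; the value $0$ at $j=\lfloor(n-1)/2\rfloor$ gives the upper bound, and the value $1$ at $j=\lfloor(n-1)/2\rfloor-1$ gives the lower bound, because $\{V_k\}$ is nonincreasing. This exact determination of several $V_k$'s in one stroke — not a genus-type adjunction inequality plus a separate lower-bound device — is the key idea your proposal does not find.
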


\begin{remark}
For any coprime $p,q>0$, let $T_{p,q}$ denote the $(p,q)$-torus knot.
Then we note that $\nu^+(T_{p,q})= (p-1)(q-1)/2$ \cite{hom-wu, tau}, and hence the inequality in Theorem \ref{thm1} implies 
$$
\nu^+(T_{n,n-1}\#K\#(-K)) \leq \nu^+(J\#(-K)) \leq \nu^+(T_{n,n+1}\#K\#(-K)).
$$
Since both $T_{n,n-1}\#K$ and $T_{n,n+1}\#K$ are obtained from $K$ by a positive full-twist with $n$-linking, the inequalities are best possible.
\end{remark}

Here we note that Theorem \ref{thm1} gives an inequality for $J\# (-K)$ rather than $J$ and $K$. However, by subadditivity of $\nu^+$ \cite{bodnar-celoria-golla}, we also have the following result for $J$ and $K$.
\begin{thm}
\label{thm2}
Suppose that $K$ is deformed into $J$ by a positive full-twist with $n$-linking.
If $n=0$, then $\nu^+(J) \leq \nu^+ (K)$. Otherwise, we have
$$
\frac{(n-1)(n-2)}{2} - \nu^+(-K) \leq \nu^+(J) \leq  \frac{n(n-1)}{2} + \nu^+(K).
$$
\end{thm}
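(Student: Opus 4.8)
The plan is to deduce Theorem \ref{thm2} from Theorem \ref{thm1} purely formally, using the subadditivity of $\nu^+$ under connected sum from \cite{bodnar-celoria-golla}, namely $\nu^+(A\#B)\le\nu^+(A)+\nu^+(B)$ for all knots $A,B$. The two ingredients I will combine are this inequality and the bounds on $\nu^+(J\#(-K))$ already established in Theorem \ref{thm1}; no new Heegaard Floer input is needed.

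For the upper bound I would write $J$ as $(J\#(-K))\#K$ and apply subadditivity to get
$$
\nu^+(J)\le\nu^+(J\#(-K))+\nu^+(K).
$$
When $n=0$, Theorem \ref{thm1} gives $\nu^+(J\#(-K))=0$, so this immediately yields $\nu^+(J)\le\nu^+(K)$. When $n\ge1$, Theorem \ref{thm1} gives $\nu^+(J\#(-K))\le n(n-1)/2$, so the displayed inequality becomes $\nu^+(J)\le n(n-1)/2+\nu^+(K)$, which is the asserted upper bound.

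For the lower bound (only needed when $n\ge1$) I would instead apply subadditivity in the form $\nu^+(J\#(-K))\le\nu^+(J)+\nu^+(-K)$ and combine it with the lower bound $(n-1)(n-2)/2\le\nu^+(J\#(-K))$ from Theorem \ref{thm1}, obtaining
$$
\frac{(n-1)(n-2)}{2}\le\nu^+(J)+\nu^+(-K),
$$
and hence $(n-1)(n-2)/2-\nu^+(-K)\le\nu^+(J)$. This completes all cases.

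Since the argument is a two-line formal consequence of Theorem \ref{thm1} and subadditivity, there is no real obstacle here; the only point to be careful about is bookkeeping the two different groupings $(J\#(-K))\#K$ versus $J\#(-K)$ so that the right instance of subadditivity is used for each of the two bounds, and noting that the $n=0$ statement uses only the upper-bound grouping.
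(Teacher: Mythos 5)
Your proposal is correct and follows exactly the same route as the paper: both bounds come from applying the subadditivity theorem of Bodn\'ar--Celoria--Golla to the groupings $J=(J\#(-K))\#K$ and $J\#(-K)$, respectively, and then inserting the estimates on $\nu^+(J\#(-K))$ from Theorem \ref{thm1}. There is nothing to add.
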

Furthermore,
we can use Theorem \ref{thm2} to obtain the following lower bound for the $\nu^+$-invariant of all cable knots (including negative cables).

\begin{thm}
\label{thm3}
For any knot $K$ and coprime integers $p,q$ with $p>0$, we have
$$
\nu^+(K_{p,q}) \geq p \nu^+(K) + \frac{(p-1)(q-1)}{2},
$$
where $K_{p,q}$ denotes the the $(p,q)$-cable of $K$.
satisfies
\end{thm}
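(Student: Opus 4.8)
The plan is to deduce Theorem \ref{thm3} from the upper bound in Theorem \ref{thm2} together with Wu's cabling formula for $\nu^+$, which asserts that $\nu^+(K_{p,q}) = p\nu^+(K) + (p-1)(q-1)/2$ once $q$ is sufficiently large relative to $p$ and $\nu^+(K)$ \cite{wu}. The idea is that a large positive cable is where we have exact information, and arbitrary cables are reached from the large ones by a sequence of full-twist operations, each of which is controlled by Theorem \ref{thm2}. The case $p=1$ is trivial: $K_{1,q}$ is isotopic to $K$ for every $q$, and the claimed inequality reads $\nu^+(K)\geq \nu^+(K)$. So assume $p\geq 2$.

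The key geometric point is the correspondence between full twists on the cabling annulus and shifts of the framing parameter. Let $D$ be a disk whose boundary is an unknot meeting $K_{p,q}$ transversely in exactly the $p$ parallel strands of the cable pattern, so that $\lk(K_{p,q},\partial D)=p$. Performing $(-1)$-surgery along $\partial D$ adds one positive (right-handed) full twist to these $p$ strands and, with the standard cabling conventions, carries $K_{p,q}$ to $K_{p,q+p}$; moreover $\gcd(p,q+p)=\gcd(p,q)=1$, so $K_{p,q+p}$ is again a knot. Thus $K_{p,q}$ is deformed into $K_{p,q+p}$ by a positive full-twist with $p$-linking, and the upper bound in Theorem \ref{thm2} gives
\[
\nu^+(K_{p,q+p}) \leq \nu^+(K_{p,q}) + \frac{p(p-1)}{2}.
\]
Iterating this move $m$ times (at each stage applying Theorem \ref{thm2} to the current cable and the next one) yields, for every $m\geq 0$,
\[
\nu^+(K_{p,q+mp}) \leq \nu^+(K_{p,q}) + m\cdot\frac{p(p-1)}{2}.
\]

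To conclude, choose $m$ large enough that $q+mp$ falls within the range where Wu's formula applies to $K_{p,q+mp}$; this is possible because the integers $q+mp$ remain coprime to $p$ and tend to $+\infty$. Substituting
\[
\nu^+(K_{p,q+mp}) = p\nu^+(K) + \frac{(p-1)(q-1)}{2} + m\cdot\frac{p(p-1)}{2}
\]
into the iterated inequality, the terms $m\cdot p(p-1)/2$ cancel, leaving $\nu^+(K_{p,q}) \geq p\nu^+(K) + (p-1)(q-1)/2$, as desired. I expect the main obstacle to be the careful bookkeeping in the first step: verifying that the full twist on the cabling annulus is positive with linking number exactly $p$, and that it sends $K_{p,q}$ to $K_{p,q+p}$ rather than $K_{p,q-p}$ under the chosen orientation and framing conventions, so that we really do move toward the regime governed by Wu's formula; identifying the precise hypothesis in that formula and checking it is eventually met along the progression $\{q+mp\}$ is then routine.
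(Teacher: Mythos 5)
Your proposal is correct and follows essentially the same argument as the paper: deform $K_{p,q}$ into $K_{p,q+np}$ by $n$ positive full-twists with $p$-linking, apply the upper bound of Theorem \ref{thm2} at each step, then take $n$ large enough that Wu's cabling formula applies to $K_{p,q+np}$ and cancel the $np(p-1)/2$ terms. The only cosmetic difference is that you dispose of the $p=1$ case separately, whereas the paper runs the same argument uniformly for all $p>0$.
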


Note that Wu proves in \cite{wu} that the equality holds in the case
where $p,q>0$ and $q\geq (2\nu^+(K)-1)p-1$. Hence Theorem \ref{thm3} partially extends his result
to arbitrary cables. Furthermore, Theorem \ref{thm3} also enables us 
to extend Wu's 4-ball genus bound for particular positive cable knots to all positive cable knots.

\begin{cor}
\label{cor2}
If $\nu^+(K) = g_4(K)$, then for any coprime $p,q>0$, we have
$$
\nu^+(K_{p,q}) = g_4(K_{p,q}) = p g_4(K) + \frac{(p-1)(q-1)}{2}.
$$
\end{cor}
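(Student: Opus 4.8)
The plan is to sandwich the three quantities $\nu^+(K_{p,q})$, $g_4(K_{p,q})$ and $p\,g_4(K) + \frac{(p-1)(q-1)}{2}$ using three inequalities. First, Theorem \ref{thm3} applied to $K_{p,q}$ (with $p>0$) gives $\nu^+(K_{p,q}) \ge p\,\nu^+(K) + \frac{(p-1)(q-1)}{2}$, which under the hypothesis $\nu^+(K)=g_4(K)$ reads $\nu^+(K_{p,q}) \ge p\,g_4(K) + \frac{(p-1)(q-1)}{2}$. Second, the general slice-genus bound $\nu^+(J)\le g_4(J)$, a basic property of $\nu^+$ from \cite{hom-wu}, applied with $J=K_{p,q}$, upgrades this to
$$g_4(K_{p,q}) \;\ge\; \nu^+(K_{p,q}) \;\ge\; p\,g_4(K) + \frac{(p-1)(q-1)}{2}.$$

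It remains to prove the matching upper bound $g_4(K_{p,q}) \le p\,g_4(K) + \frac{(p-1)(q-1)}{2}$, which holds because $p,q>0$. I would use the standard cabling-of-surfaces construction: take a properly embedded genus-$g_4(K)$ surface in $B^4$ with boundary $K$, replace it by $p$ parallel copies along the $0$-framing (whose boundary is the $(p,0)$-cable link of $K$), and cap this off inside a collar of a standardly embedded solid torus using the pattern piece of a Seifert surface for the positive torus knot $T_{p,q}$, which contributes genus $g_4(T_{p,q}) = \frac{(p-1)(q-1)}{2}$. Equivalently, this is the satellite genus inequality $g_4(P(K)) \le g_4(P(U)) + w(P)\,g_4(K)$ applied to the $(p,q)$-cable pattern, whose winding number is $w(P)=p$; cf.\ Wu \cite{wu}. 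Comparing with the previous display forces all three quantities to coincide, which is the statement of the Corollary.

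The only real obstacle is this upper bound: one has to verify that the cabled surface really does bound $K_{p,q}$ with the correct value of $q$ (keeping careful track of the $0$-framing normalization) and that the torus-knot piece contributes exactly $\frac{(p-1)(q-1)}{2}$ to the genus. Positivity of both $p$ and $q$ is used precisely here, since $g_4(T_{p,q}) = \frac{(p-1)(q-1)}{2}$ only when $p,q>0$; this is why the Corollary is restricted to positive cables, even though the lower bound coming from Theorem \ref{thm3} is valid for all $p>0$.
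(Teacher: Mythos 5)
Your argument is correct and is essentially the same as the paper's: the lower bound comes from Theorem \ref{thm3} together with $\nu^+ \le g_4$ and the hypothesis $\nu^+(K)=g_4(K)$, and the upper bound is the standard cabling-of-surfaces estimate $g_4(K_{p,q}) \le p\,g_4(K) + \frac{(p-1)(q-1)}{2}$ (the paper phrases it as $p$ parallel copies of a slice surface joined by $(p-1)q$ half-twisted bands, which is equivalent to your torus-knot-pattern description). Your closing remark correctly identifies that positivity of $p,q$ is needed only for the upper bound.
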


As an application, for instance, we can determine the 4-ball genus for all positive cables of
the knot $T_{2,5}\#T_{2,3}\#T_{2,3}\#(-(T_{2,3})_{2,5})$. This example is used in \cite{hom-wu} to show that $\nu^+ \neq \tau$. Remark that the $\tau$-invariant cannot determine the 4-ball genus for any positive cable of the knot.
Also note that this generalizes \cite[Proposition 3.5]{hom-wu} and Wu's result in the introduction of \cite{wu}. 


\subsection{A partial order on $\nu^+$-equivalence classes}

Let $\mathcal{C}$ denote the knot concordance group. For two elements 
$x,y \in \mathcal{C}$, we say that $x$ is {\it $\nu^+$-equivalent} to $y$
if the equalities $\nu^+(x-y)=0$ and $\nu^+(y-x)=0$ hold.
In \cite{hom}, Hom proves that $\nu^+$-equivalence is an equivalence relation and the quotient has a group structure as a quotient group of $\mathcal{C}$
(we denote it by $\mathcal{C}_{\nu^+}$). Furthermore, 
it follows from \cite[Theorem 1]{hom} that
the invariants $\tau, \Upsilon, V_k, \nu^+$ and $d(S^3_{p/q}(\cdot),i)$ are invariant under not only knot concordance but also $\nu^+$-equivalence.
In particular, we can regard these invariants as maps
$$
\begin{array}{rll}
\tau: &\mathcal{C}_{\nu^+} \to \mathbb{Z},&\ \\
\Upsilon:&\mathcal{C}_{\nu^+} \to \Cont([0,2]),&\ \\
V_k: &\mathcal{C}_{\nu^+} \to \mathbb{Z}_{\geq 0},&\ \\
\nu^+: &\mathcal{C}_{\nu^+} \to \mathbb{Z}_{\geq 0}, &\ \text{ and}\\
d(S^3_{p/q}(\cdot),i): &\mathcal{C}_{\nu^+} \to \mathbb{Q}&\ 
\end{array}
$$
respectively. (Here, $\Cont([0,2])$ denotes the set of continuous functions from the closed interval $[0,2]$ to $\mathbb{R}$.)

The second aim of this paper is to introduce a partial order on $\mathcal{C}_{\nu^+}$
and discuss its relationship to full-twists.
The precise definition of the partial order is as follows.
\begin{dfn}
For two elements $x,y \in \mathcal{C}_{\nu^+}$, we write $x \leq y$
if  $\nu^+(x-y)=0$.
\end{dfn}

Note that the equality in the above definition is one of the equalities in the definition of 
$\nu^+$-equivalence, and so this partial order seems to be very natural.
In fact, we can prove the following proposition.

\begin{prop}
\label{prop partial}
The relation $\leq$ is a partial order on $\mathcal{C}_{\nu^+}$ with the following properties;
\begin{enumerate}
\item For elements $x, y, z \in \mathcal{C}_{\nu^+}$, if $ x \leq y$, then 
$x+z \leq y+z$.
\item For elements $x, y \in \mathcal{C}_{\nu^+}$, if $ x \leq y$, then 
$-y \leq -x$.
\item For coprime integers $p,q>0$, $k \in \mathbb{Z}_{\geq 0}$ and $0 \leq i \leq p-1$,
all of $\tau$, $\nu$, $-\Upsilon$, $V_k$, $\nu^+$
and $-d(S^3_{p/q}(\cdot),i)$ preserve the partial order.
\end{enumerate}
\end{prop}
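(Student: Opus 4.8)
The plan is to dispatch the partial-order axioms and properties (1) and (2) quickly, as formal consequences of the subadditivity \cite{bodnar-celoria-golla} and nonnegativity of $\nu^+$, and then to concentrate the work on property (3). First: $\nu^+(x-y)$ is well defined for $x,y\in\mathcal{C}_{\nu^+}$ since $\nu^+$ descends to $\mathcal{C}_{\nu^+}$ \cite{hom}; reflexivity is $\nu^+(0)=0$; transitivity follows from $\nu^+(x-z)\le\nu^+(x-y)+\nu^+(y-z)$ together with $\nu^+\ge0$; and antisymmetry holds because $x\le y$ and $y\le x$ together are exactly the statement that $x$ and $y$ are $\nu^+$-equivalent, i.e.\ equal in $\mathcal{C}_{\nu^+}$. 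Properties (1) and (2) are purely algebraic: $(x+z)-(y+z)=x-y$ and $(-y)-(-x)=x-y$, so the inequality to be proved is in each case literally the hypothesis.

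For property (3) the approach rests on the observation that if an invariant $\phi$ is subadditive under connected sum and satisfies $\phi(a)\le\phi(0)$ whenever $\nu^+(a)=0$, then $\phi$ preserves $\le$: for $x\le y$ write $x=y+(x-y)$ with $\nu^+(x-y)=0$, so $\phi(x)\le\phi(y)+\phi(x-y)\le\phi(y)$. This applies to $\nu^+$ itself (subadditive, \cite{bodnar-celoria-golla}), to $\tau$ and to each $-\Upsilon(t)$ (additive under connected sum, hence subadditive), and, with extra care, to $\nu$: the subadditivity of $\nu$ is not evident, but follows from $\nu(K)\in\{\tau(K),\tau(K)+1\}$, Hom's characterization (in terms of $\varepsilon(K)$) of when the second alternative holds, and the fact that $\varepsilon(K_1\#K_2)\in\{0,1\}$ whenever $\varepsilon(K_1),\varepsilon(K_2)\in\{0,1\}$. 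The inequalities $\phi(a)\le\phi(0)$ for $\nu^+(a)=0$ — namely $\tau(a)\le0$, $\nu(a)\le0$, $\Upsilon_a(t)\ge0$ — come from $V_0=0\iff\nu^+=0$, the standard comparisons between $\tau,\nu,\nu^+$ \cite{hom-wu}, and the standard lower bound on $\Upsilon$ in terms of $\nu^+$. The invariant $V_k$ is not subadditive, so I would argue for it directly: from the mixed connected-sum estimate $V_k(K_1\#K_2)\le V_k(K_1)+V_0(K_2)$ and the fact that $\nu^+(x-y)=0$ forces $V_0(x-y)=0$, we get $V_k(x)=V_k(y+(x-y))\le V_k(y)+V_0(x-y)=V_k(y)$.

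The $d$-invariant case I would reduce to the $V_k$ case via the Ni--Wu rational surgery formula: for coprime $p,q>0$ and $0\le i\le p-1$,
$$
d(S^3_{p/q}(K),i)=d(S^3_{p/q}(U),i)-2\max\{V_{j_1}(K),V_{j_2}(K)\},
$$
where $j_1,j_2\ge0$ depend only on $p,q,i$ (the $H$-term in the usual statement being rewritten as a $V$-term through the symmetry $H_s=V_{-s}$). Given $x\le y$, the $V_k$ part of (3) yields $V_{j_1}(x)\le V_{j_1}(y)$ and $V_{j_2}(x)\le V_{j_2}(y)$, hence $\max\{V_{j_1}(x),V_{j_2}(x)\}\le\max\{V_{j_1}(y),V_{j_2}(y)\}$ and therefore $d(S^3_{p/q}(x),i)\ge d(S^3_{p/q}(y),i)$; equivalently $-d(S^3_{p/q}(\cdot),i)$ preserves $\le$.

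I expect the main obstacle to be not any single deduction but the careful assembly of the inputs for part (3): confirming that, for each of the six invariants, both the connected-sum (sub)additivity and the ``$\nu^+(a)=0\Rightarrow\phi(a)\le\phi(0)$'' inputs are available in exactly the form required. Within that, the two mildly delicate points are the subadditivity of $\nu$ — which forces the detour through $\varepsilon$ — and the lower bound $\Upsilon_a(t)\ge0$ for $\nu^+(a)=0$; everything else reduces to the one-line mechanism above or to formal algebra.
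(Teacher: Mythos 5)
Your proof is correct, and for five of the six invariants it runs along the same lines as the paper's: reflexivity from $\nu^{+}(0)=0$, transitivity and antisymmetry from the subadditivity of $\nu^{+}$ \cite{bodnar-celoria-golla} and the definition of $\nu^{+}$-equivalence, properties (1) and (2) from the identity $(x+z)-(y+z)=(-y)-(-x)=x-y$, and for (3) the comparisons $\tau\leq\nu^{+}$ and $-\Upsilon(t)\leq(1-|1-t|)\nu^{+}$ combined with additivity of $\tau,\Upsilon$, the refined estimate $V_{m+n}(a+b)\leq V_{m}(a)+V_{n}(b)$ for $V_{k}$, and the Ni--Wu surgery formula to reduce $-d(S^{3}_{p/q}(\cdot),i)$ to the $V_{k}$ case. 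Your abstraction of all of these into the single mechanism ``$\phi$ subadditive with $\phi(0)=0$, together with $\nu^{+}(a)=0\Rightarrow\phi(a)\leq 0$, implies $\phi$ is order-preserving'' is a clean repackaging of the same idea.

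The one substantive divergence is a point in your favor. The statement of the proposition lists $\nu$ among the order-preserving invariants, but the paper's own Lemma~\ref{lem partial3} silently omits $\nu$ from the list, and its proof never touches $\nu$; the monotonicity of $\nu$ does not follow formally from that of $\tau$ and $\nu^{+}$ alone (for instance, $\tau(x)=\tau(y)$ with $\nu^{+}(x)=\nu^{+}(y)=\tau(x)+1$ is consistent a priori with $\nu(x)=\tau(x)+1>\tau(y)=\nu(y)$). So there is a genuine gap in the paper's proof as written, and you correctly identified that closing it requires the finer structure: $\nu(K)\in\{\tau(K),\tau(K)+1\}$ with $\nu(K)=\tau(K)$ exactly when $\varepsilon(K)\in\{0,1\}$, together with the connected-sum behavior of $\varepsilon$ (in particular $\varepsilon(K_{1}\#K_{2})\in\{0,1\}$ when both $\varepsilon(K_{i})\in\{0,1\}$). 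Assembled as you describe, this gives the subadditivity $\nu(K_{1}\#K_{2})\leq\nu(K_{1})+\nu(K_{2})$, which, together with $\nu\leq\nu^{+}$, feeds into your general mechanism. The detour is really needed, and your sketch of it is sound.
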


Here the third assertion in Proposition \ref{prop partial} implies that there are many algebraic obstructions to one element of $\mathcal{C}_{\nu^+}$ being less than another.
 On the other hand, the following theorem establishes similar obstructions in terms of geometric deformations.

\begin{thm}
\label{thm partial}
Suppose that $K$ is deformed into $J$ by a positive full-twist with $n$-linking.
\begin{enumerate}
\item If $n= 0 \text{ or } 1$, then $[J]_{\nu^+} \leq [K]_{\nu^+}$.
\item If $n \geq 3$, then $[J]_{\nu^+} \nleq [K]_{\nu^+}$. In particular, if the geometric intersection number between $K$ and $D$ is equal to $n$, then $[J]_{\nu^+} > [K]_{\nu^+}$.
\end{enumerate}
Here $[K]_{\nu^+}$ denotes the $\nu^+$-equivalence class of a knot $K$, and 
the symbol $>$ means $x \geq y$ and $x \neq y$ for elements $x,y \in \mathcal{C}_{\nu^+}$.
\end{thm}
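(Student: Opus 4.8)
The plan is to derive both parts directly from the numerical bounds already established in Theorem \ref{thm1}, together with the algebraic properties of $\leq$ recorded in Proposition \ref{prop partial}. For part (1), recall that $[J]_{\nu^+} \leq [K]_{\nu^+}$ means precisely $\nu^+(J\#(-K)) = 0$. When $n = 0$, Theorem \ref{thm1} gives $\nu^+(J\#(-K)) = 0$ directly. When $n = 1$, the upper bound $\nu^+(J\#(-K)) \leq n(n-1)/2 = 0$ combined with non-negativity of $\nu^+$ forces $\nu^+(J\#(-K)) = 0$ as well. So part (1) is immediate.

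For part (2), when $n \geq 3$ the lower bound in Theorem \ref{thm1} reads $\nu^+(J\#(-K)) \geq (n-1)(n-2)/2 \geq 1 > 0$, so $\nu^+(J\#(-K)) \neq 0$, which is exactly the statement $[J]_{\nu^+} \nleq [K]_{\nu^+}$. The remaining claim — that $[J]_{\nu^+} > [K]_{\nu^+}$ when the geometric intersection number equals $n$ — requires showing $[K]_{\nu^+} \leq [J]_{\nu^+}$, i.e. $\nu^+(K\#(-J)) = 0$. The key geometric observation is that under the hypothesis that $K$ meets $D$ geometrically $n$ times (so all intersections are positive and no cancellation occurs), the inverse operation recovering $K$ from $J$ is a \emph{negative} full-twist; equivalently, $-K$ is obtained from $-J$ by a positive full-twist, along a disk meeting $-J$ in $n$ points with zero linking number (the $n$ strands now run in a balanced fashion after the twist, so the algebraic count is $0$ while the geometric count stays $n$). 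Here I would need to check carefully that the linking number of $-J$ with the twisting curve is indeed $0$ in this situation; this is the point where the geometric intersection hypothesis is genuinely used. Granting that, Theorem \ref{thm1} (the $n=0$ case, applied to the pair $-J \rightsquigarrow -K$) yields $\nu^+((-K)\#(-(-J))) = \nu^+((-K)\#J) = 0$, which is $[K]_{\nu^+} \leq [J]_{\nu^+}$. Together with $[J]_{\nu^+} \nleq [K]_{\nu^+}$ this gives $[J]_{\nu^+} > [K]_{\nu^+}$.

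The main obstacle I anticipate is the geometric bookkeeping in the last step: verifying that when the geometric and algebraic intersection numbers of $K$ with $D$ coincide, the curve $\partial D$ viewed in the surgered manifold (with its orientation from $-D$) has linking number exactly $0$ with $-J$, so that the $n = 0$ clause of Theorem \ref{thm1} applies. One has to track how a positive full-twist with $n$-linking, when "undone", distributes the $n$ strands; the cleanest way is probably to present $J$ explicitly as living in a solid-torus-times-interval neighborhood where the twist is a standard $2\pi$ Dehn twist, observe that the $n$ parallel strands become the core cabling pattern, and compute the linking number there. Once this local model is in place the linking-number computation is routine. Everything else is a direct quotation of Theorem \ref{thm1} and the non-negativity of $\nu^+$; no new Heegaard Floer input is needed.
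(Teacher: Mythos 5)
Parts (1) and the first half of (2) are fine and match the paper. The trouble is the last step, where you try to realize the inverse operation (from $J$ back to $K$, or equivalently from $-J$ to $-K$) as a \emph{single} positive full-twist with $0$-linking. This does not work, and the step you flagged as needing checking is in fact false. Surgery along $\partial D$ does not change the linking number of the knot with $\partial D$, and neither mirroring nor orientation reversal can turn $n$ into $0$: after taking the orientation-reversed mirror, the linking number of $-J$ with the image of $\partial D$ is still $\pm n$, never $0$, regardless of the geometric intersection condition. Concretely, Theorem~\ref{thm1} itself rules out your claim: if the unknot were obtained from $-T_{3,4}$ by a positive full-twist with $0$-linking, then $\nu^+(\text{unknot}\#T_{3,4})$ would have to vanish, but it equals $3$.

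The paper's argument does something genuinely different: it does not attempt to undo the full-twist in one step. Instead it observes that, when $K$ meets $D$ in exactly $n$ points, the effect of the twist inside a neighborhood $\nu(D)$ is to replace a trivial $n$-braid by the positive full-twist braid $\Delta_n^2$, which has only positive crossings. Changing each of these positive crossings to a negative one is an individual positive full-twist with $0$-linking (along a small circle encircling the crossing), and performing all of these crossing changes deforms $J$ back to $K$. Applying the $n=0$ case of Theorem~\ref{thm1} once per crossing change, and then using transitivity of $\leq$, gives $[K]_{\nu^+}\leq[J]_{\nu^+}$. The geometric intersection hypothesis is used to ensure that what happens inside $\nu(D)$ really is the standard positive full-twist on a trivial braid (so that it can be undone entirely by positive-to-negative crossing changes); it is not used to make a linking number vanish. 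To repair your proof you would need to replace the single-twist idea with this multi-step crossing-change decomposition.
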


In the above theorem, we can see that only the case of $n=2$ tells us nothing about the partial order. This follows from the fact that Theorem \ref{thm1} gives
$0 \leq \nu^+(x-y) \leq 1$ for $n=2$ and hence we can show neither 
$\nu^+(x-y) = 0$ nor $\nu^+(x-y)\neq 0$. 

We also mention the relationship between our partial order and satellite knots.
Let $P$ be a knot in a standard solid torus $V \subset S^3$ with the longitude $l$, and $K$ a knot in $S^3$. For $n \in \mathbb{Z}$, Let $e_n: V \to S^3$ be
an embedding so that $e(V)$ is a tubular neighborhood of $K$ and $\lk(K,e_n(l))=n$. 
Then we call $e_n(P)$ the {\it $n$-twisted satellite knot of $K$ with pattern $P$},
and denote it by $P(K,n)$.
Furthermore, if $P$ represents $m$ times generators of $H_1(V;\mathbb{Z})$ for $m\geq0$,
then we denote $w(P):=m$.
It is proved in \cite[Theorem B]{kim-park} that the map $[K]_{\nu^+} \mapsto [P(K,n)]_{\nu^+}$ for any pattern $P$ with $w(P) \neq 0$.
We extend their theorem to all satellite knots, and show that those maps preserve our partial order.

\begin{prop}\label{prop satellite}
For any pattern $P$ and $n \in \Bbb{Z}$, the map $P_n: \mathcal{C}_{\nu^+} \to \mathcal{C}_{\nu^+}$ defined by
$P_n([K]_{\nu^+}) := [P(K,n)]_{\nu^+}$ is well-defined and preserve the partial order $\leq$.
\end{prop}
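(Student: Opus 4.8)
The plan is to reduce the whole statement to a single one-sided inequality at the level of knots, and then to establish that inequality from the behavior of the knot Floer complex under the satellite construction.

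\emph{Step 1: reduction.} I claim it suffices to prove the following assertion about knots: if $K$ and $J$ are knots in $S^3$ with $\nu^+(K\#(-J))=0$, then $\nu^+(P(K,n)\#(-P(J,n)))=0$; equivalently, in the language of the partial order, $[J]_{\nu^+}\leq[K]_{\nu^+}$ implies $[P(J,n)]_{\nu^+}\leq[P(K,n)]_{\nu^+}$. Indeed, applying this with $K$ and $J$ interchanged gives the opposite inequality, so when $[K]_{\nu^+}=[J]_{\nu^+}$ — that is, $[J]_{\nu^+}\leq[K]_{\nu^+}$ and $[K]_{\nu^+}\leq[J]_{\nu^+}$ both hold — we get $[P(K,n)]_{\nu^+}=[P(J,n)]_{\nu^+}$, which is exactly the well-definedness of $P_n$ (this is the new content when $w(P)=0$, and recovers \cite[Theorem B]{kim-park} when $w(P)\neq0$). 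Granting well-definedness, the displayed implication is precisely the statement that $P_n$ preserves $\leq$.

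\emph{Step 2: functoriality of the satellite formula.} By \cite{hom}, $[J]_{\nu^+}\leq[K]_{\nu^+}$ is equivalent to the existence of a \emph{local map} $CFK^\infty(J)\to CFK^\infty(K)$, i.e. a grading-preserving, filtered chain map that becomes an isomorphism after inverting $U$. Now recall the bordered description of the satellite: the type-$D$ module of the complement $S^3\setminus\nu(K)$ with the $n$-framing is determined, up to homotopy equivalence, by $CFK^\infty(K)$, and pairing it with the type-$A$ module of the pattern $P\subset V$ (which does not depend on $K$) computes $\widehat{CFK}(P(K,n))$, and more generally $CFK^\infty(P(K,n))$. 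A local map $f\colon CFK^\infty(J)\to CFK^\infty(K)$ induces a corresponding map between the type-$D$ modules of the two complements, and pairing it with the fixed type-$A$ module of $P$ produces a map $CFK^\infty(P(J,n))\to CFK^\infty(P(K,n))$ that is again grading-preserving and filtered, and still an isomorphism after inverting $U$ — the last point because the pattern module is fixed and finitely generated, so the pairing commutes with the relevant localization and carries $HF^\infty$-isomorphisms to $HF^\infty$-isomorphisms. This exhibits a local map $CFK^\infty(P(J,n))\to CFK^\infty(P(K,n))$, hence $[P(J,n)]_{\nu^+}\leq[P(K,n)]_{\nu^+}$. The winding number of $P$ plays no role in this argument, so it applies to all patterns uniformly; for $w(P)\neq0$ it is essentially \cite[Theorem B]{kim-park}, reorganized so as to extract the one-sided inequality.

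\emph{Main obstacle, and a geometric alternative.} The real work is in Step 2: one must verify that the map induced on the satellite complexes is genuinely \emph{local} — in particular that the $HF^\infty$-isomorphism property survives the pairing — and that the pattern's bordered invariant is literally independent of the companion while transforming as claimed when a local map is applied to the companion's complement. If one prefers to avoid bordered invariants, there is a parallel geometric route: realize $[J]_{\nu^+}\leq[K]_{\nu^+}$ by a genus-zero cobordism (an annulus) from $K$ to $J$ inside a four-manifold $W$ of a class that forces $\nu^+=0$ of the associated difference knot, then satellite this cobordism by taking $(\text{annulus})\times D^2\subset W$ and sweeping $P$ along the interval factor; since the satellite of a product cobordism is again a product, the resulting cobordism from $P(K,n)$ to $P(J,n)$ still has genus zero and still lies in a four-manifold of the same class, whence $\nu^+(P(K,n)\#(-P(J,n)))=0$. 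The delicate point there is to pin down the class of four-manifolds precisely enough that it is simultaneously implied by $\nu^+=0$ and preserved under this operation. In either approach, once the framework is fixed the case $w(P)=0$ requires no separate treatment, which is what yields the extension of \cite[Theorem B]{kim-park} to arbitrary patterns.
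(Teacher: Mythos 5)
Your Step 1 (reduce to the one-sided implication $V_0(K\#(-J))=0 \Rightarrow V_0(P(K,n)\#(-P(J,n)))=0$, and reduce further to $n=0$ by re-twisting the pattern) matches the paper's reduction. The divergence — and the gap — is in Step 2.

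Your bordered route rests on the claim, attributed to \cite{hom}, that $[J]_{\nu^+}\leq[K]_{\nu^+}$ is \emph{equivalent} to the existence of a local map $CFK^\infty(J)\to CFK^\infty(K)$. That equivalence is not in \cite{hom}, and the direction you actually need ($\nu^+$-inequality $\Rightarrow$ local map) is the problematic one: a local map does give a $V_k$-inequality and hence the $\nu^+$-inequality, but a single vanishing $V_0(J\#(-K))=0$ does not obviously produce a filtered, grading-preserving chain map inducing an isomorphism on $H_*(\cdot^\infty)$. Local equivalence is a strictly finer relation than what $V_0=0$ records, so your argument never gets off the ground without a separate proof of that implication — which is not supplied, and which I do not believe is true in general. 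On top of that, the assertion that pairing a fixed type-$A$ module carries local maps of type-$D$ modules to local maps of the resulting $CFK^\infty$ is itself a nontrivial claim that you flag but do not prove. Your geometric alternative has the analogous gap at a different point: $\nu^+(K\#(-J))=0$ does not supply an annulus from $K$ to $J$ embedded in a four-manifold of a controllable type, so there is nothing to ``satellite.'' You correctly identify this as delicate, but it is not a detail to be pinned down later; it is where the whole argument would have to be rebuilt.

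The paper avoids both obstacles by not attempting to cobound $K$ and $J$ at all. Instead it builds a genus-zero ribbon cobordism, \emph{entirely inside} $(V\natural V)\times[0,1]$ and independent of $K$ and $J$, from $P\#(-P)$ to parallel copies of $C\#C$ (the doubled core). Feeding $K\#(-J)$ into the $V\natural V$ slot and capping the resulting $(m,0)$-cable with cores of a $0$-framed $2$-handle gives a disk bounded by $-(P(K)\#(-P(J)))$ inside $W_0(K\#(-J))$. A further $(-1)$-framed handle, a blow-down, and removal of an arc produce a cobordism $X$ with $H_*(X)\cong H_*(S^2\times D^2)$ between $S^3_1(P(K)\#(-P(J)))$ and $S^3_0(K\#(-J))$, and then Ozsv\'ath--Szab\'o's $d_{-1/2}$ bound converts this into the inequality $V_0(P(K)\#(-P(J)))\le V_0(K\#(-J))$. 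The hypothesis $V_0(K\#(-J))=0$ is used only at that last step, purely algebraically, which is exactly why no geometric realization of the $\nu^+$-inequality is needed. Your proposal, as written, does not contain a usable substitute for this mechanism.
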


By Proposition \ref{prop satellite}, we obtain infinitely many order-preserving maps on
$\mathcal{C}_{\nu^+}$ which have geometric meaning. 
Now it is an interesting problem to compare these satellite maps.
Theorem \ref{thm partial} tells us the relationship among the maps $\{P_n\}_{n\in \Bbb{Z}}$
for some particular patterns.
\begin{cor}
\label{cor satellite}
Let $P$ be a pattern.
\begin{enumerate}
\item If $w(P)= 0 \text{ or } 1$, then the inequality $P_m (x) \geq P_n(x)$ holds 
for any integers $m < n$ and $x \in \mathcal{C}_{\nu^+}$.
\item If the geometric intersection number between $P$ and the meridian disk of $V$ is equal to $w(P)$ and $w(P)\geq3$, then $P_m (x) < P_n(x)$ for any $m < n$ and $x \in \mathcal{C}_{\nu^+}$.
\end{enumerate}
\end{cor}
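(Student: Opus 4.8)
The plan is to deduce the corollary from Theorem~\ref{thm partial} by observing that, for a fixed pattern $P$, the $n$-twisted satellite and the $(n+1)$-twisted satellite of a knot differ by a single full-twist operation of linking number $w(P)$. Concretely, I would fix a knot $K$ representing a class $x\in\mathcal{C}_{\nu^+}$, let $D_0$ be the meridian disk of the standard solid torus $V$, and set $D_n:=e_n(D_0)$. Then $\partial D_n$ is a meridian of the tubular neighborhood $e_n(V)$ of $K$, hence an unknot in $S^3$, and it meets $P(K,n)=e_n(P)$ in $|P\cap D_0|$ points whose algebraic count is $w(P)$. The first thing to establish is that the $(-1)$-surgery along $\partial D_n$ (a Rolfsen twist, which returns $S^3$) carries $P(K,n)$ to $P(K,n+1)$; in the language of the paper, $P(K,n)$ is deformed into $P(K,n+1)$ by a positive full-twist with $w(P)$-linking.

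For that step I would first pin down the orientation conventions. A $(-1)$-surgery along $\partial D_n$ inserts a positive (right-handed) full twist into the strands of $e_n(P)$ running through $D_n$, and one must check that this increments rather than decrements the twisting parameter of the satellite. A convenient sanity check is the family of cable patterns, for which the operation is the familiar $(p,q)\rightsquigarrow(p,q+p)$ and the twisting parameter does go up by one; for a general pattern the same local picture (inserting a full twist into $V$ is a reframing of the companion embedding $e_n$) gives the passage from $e_n(P)$ to $e_{n+1}(P)$. I would also record here the refinement needed for part~(2): since $P(K,n)\cap D_n=e_n(P\cap D_0)$, the geometric intersection number of $P(K,n)$ with $D_n$ equals that of $P$ with $D_0$; so if the latter equals $w(P)$, then the hypothesis ``geometric intersection number $=$ linking number'' in Theorem~\ref{thm partial}(2) holds for every $n$.

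Granting this, the rest is a straightforward induction on $n-m$, using that $\leq$ is a partial order on $\mathcal{C}_{\nu^+}$ (Proposition~\ref{prop partial}) and that the satellite maps $P_n$ are well defined (Proposition~\ref{prop satellite}). For part~(1) we have $w(P)\in\{0,1\}$, so Theorem~\ref{thm partial}(1) applied to each full twist $P(K,n)\to P(K,n+1)$ gives $P_{n+1}(x)\leq P_n(x)$, and chaining these inequalities over $m,m+1,\dots,n-1$ yields $P_m(x)\geq P_n(x)$. For part~(2), Theorem~\ref{thm partial}(2) together with the refinement above gives $P_{n+1}(x)>P_n(x)$ for every $n$, and a short argument with antisymmetry of $\leq$ collapses the chain $P_m(x)<P_{m+1}(x)<\dots<P_n(x)$ to $P_m(x)<P_n(x)$. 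The one place I expect to spend real care is the first step: getting the sign of the full twist and the direction of the change of the twisting parameter exactly right, and confirming that the geometric-intersection hypothesis is inherited by $P(K,n)$; once that is stated correctly, everything downstream is formal.
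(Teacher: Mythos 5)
Your proposal is correct and takes essentially the same route as the paper: observe that $(-1)$-surgery on $\partial D_n = e_n(\partial D_0)$ carries $P(K,n)$ to $P(K,n+1)$ as a positive full-twist with $w(P)$-linking, note that $e_n$ preserves the geometric intersection count with the meridian disk, and then chain Theorem~\ref{thm partial} over $m, m+1, \dots, n$. The orientation check you flag (that $(-1)$-surgery on the meridian increments rather than decrements the twist parameter) is exactly right and is the only nontrivial verification in the argument.
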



\subsection{The idea of proofs: study of slice knots in $\mathbb{C}P^2$}

In this subsection, we explain the idea of our proof of Theorem \ref{thm1}.
We start from an interpretation of full-twist operations in 4-dimensional topology.

When a knot $K$ is deformed into a knot $J$ 
by a positive full-twist with $n$-linking,
we can see that the knot $J\#(-K)$ bounds a disk $D$ in $\punc \overline{\mathbb{C}P^2}$.
(Here, for a closed 4-manifold $X$, $\punc X$ denotes $X$ with an open 4-ball deleted.) 
In particular, the disk $D$ represents $n$ times a generator of 
$H_2(\punc \overline{\mathbb{C}P^2}, \partial (\punc \overline{\mathbb{C}P^2}); \mathbb{Z})
\cong \mathbb{Z}$.
In this situation, we consider Ni-Wu's $V_k$-sequence \cite{ni-wu}. 

\begin{prop}
\label{prop cp2}
Suppose that a knot $K$ in $S^3$ bounds a disk $D$ in $\punc \overline{\mathbb{C}P^2}$
such that 
$[D,\partial D] = n \gamma \in H_2(\punc \overline{\mathbb{C}P^2}, \partial (\punc \overline{\mathbb{C}P^2}); \mathbb{Z})$ for a generator $\gamma$ and some integer
$n\geq0$.
\begin{enumerate}
\item If $n=0$, then $V_0(K)=0$.

\item If $n$ is odd, then for any $0 \leq j \leq \frac{n-1}{2}$, we have
$$
V_{nj}(K) = \frac{1}{2}\left(\frac{n-1}{2}-j \right) \left(\frac{n-1}{2}-j +1 \right).
$$
\item If $n$ is even and $n>0$, then for any $0 \leq j \leq \frac{n}{2}-1$, we have
$$
V_{\frac{n}{2} + nj}(K) = \frac{1}{2} \left( \frac{n}{2}-j \right) \left( \frac{n}{2}-j-1 \right).
$$
\end{enumerate}
\end{prop}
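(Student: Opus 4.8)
The plan is to exploit the $d$-invariant (Heegaard Floer correction term) surgery formula together with the fact that large surgeries on $K$ bound negative-definite $4$-manifolds built from $\punc\overline{\mathbb{C}P^2}$ and the disk $D$. First I would recall from Ni--Wu \cite{ni-wu} that for $p/q>0$ and $0\le i\le p-1$,
$$
d(S^3_{p/q}(K),i) = d(S^3_{p/q}(U),i) - 2\max\{V_{\lfloor i/q\rfloor}(K),\, V_{\lceil (p+q-1-i)/q\rceil}(K)\},
$$
so that bounds on $d$-invariants of surgeries translate directly into bounds on the $V_k(K)$. The geometric input is this: cap off the disk $D\subset\punc\overline{\mathbb{C}P^2}$ along $\partial D = K$ with a standard $2$-handle attached along $K$ with framing $m$ (for suitable large $m>0$), obtaining a $4$-manifold $W$ with $\partial W = S^3_m(K)$. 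A neighborhood of $D\cup(\text{core})$ together with the $\overline{\mathbb{C}P^2}$-part can be arranged, after blowing down or absorbing the exceptional sphere, to be negative definite; tracking the self-intersection number of the closed-up surface $\widehat D$ (which is $m - n^2$, since $[D]^2 = -n^2$ in $\punc\overline{\mathbb{C}P^2}$) and its genus $0$ lets one feed $\widehat D$ into the adjunction-type inequality for $d$-invariants. Concretely, I would invoke the Ozsváth--Szabó / Owens--Strle bound: if $W$ is negative definite with $\partial W = S^3_m(K)$ and $c\in H^2(W)$ is characteristic, then
$$
c^2 + b_2(W) \le 4\, d(S^3_m(K), [c]) + (\text{rank correction}),
$$
applied to the characteristic vectors supported on $\widehat D$ and the exceptional class, to extract the stated equalities for the spin$^c$ structures indexed by $nj + \lfloor n/2\rfloor$ (resp.\ $nj$).

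The second key step is to produce \emph{matching} upper and lower bounds so that the inequalities collapse to equalities. For the lower bound on $V_k(K)$ one uses the negative-definite filling as above. For the upper bound I would use that $-K$ bounds the analogous disk in $\punc\mathbb{C}P^2$ (reverse the orientation of the ambient $\overline{\mathbb{C}P^2}$), giving a positive-definite filling; dualizing the $d$-invariant inequality — or equivalently running the same argument for $V_k(-K) = V_k$ of the dual knot — pins the values from above. The arithmetic to check is that the combinatorial expressions $\frac12(\frac{n-1}{2}-j)(\frac{n-1}{2}-j+1)$ and $\frac12(\frac{n}{2}-j)(\frac{n}{2}-j-1)$ are exactly the $d$-invariant defects forced by a genus-$0$ surface of the relevant self-intersection number in $\mathbb{C}P^2$; these are the same triangular numbers that appear in the torus-knot case $T_{n,n\pm1}$, which is the sanity check built into the Remark after Theorem \ref{thm1}. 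Case $(1)$, $n=0$, is easiest: then $D$ lies in a homology ball (the $\punc\overline{\mathbb{C}P^2}$ with trivial homology class is, for $d$-invariant purposes, indistinguishable from $\punc(S^1\times S^3)$ or a rational homology ball after surgery), so the standard slice-obstruction argument gives $V_0(K)=0$ directly.

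The main obstacle I anticipate is organizing the $4$-manifold topology cleanly: one must choose the $2$-handle framing $m$ large enough that the surgery formula is in the "large surgery" range for \emph{all} the relevant $V_{nj}$ (resp.\ $V_{n/2+nj}$) simultaneously, verify that the resulting $W$ (after the blow-down identifying $\overline{\mathbb{C}P^2}\#(\text{handlebody})$) is genuinely negative definite, and correctly identify which spin$^c$ structure on $S^3_m(K)$ the characteristic covector $c = \mathrm{PD}[\widehat D] + (\text{exceptional})$ restricts to — this indexing is where the arithmetic progressions $nj$ and $nj+\lfloor n/2\rfloor$ come from, and an off-by-one there would misalign everything. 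A secondary subtlety is the parity split: when $n$ is even the surface $\widehat D$ is not characteristic on its own and must be corrected by the exceptional sphere, which shifts the relevant spin$^c$ label by $n/2$ and accounts for the different shape of the formula in case $(3)$ versus case $(2)$. Once the framing and spin$^c$ bookkeeping are fixed, the inequalities follow from the standard $d$-invariant machinery and the equalities from matching against the positive-definite dual filling.
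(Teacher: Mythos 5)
Your general direction (read off $V_k$ from $d$-invariants of surgeries on $K$ that bound pieces of $\overline{\mathbb{C}P^2}$) is right, but the mechanism by which the inequalities collapse to \emph{equalities} is missing, and a couple of the proposed steps would fail.

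The paper does not attach a $2$-handle with a ``large'' framing $m$ and then try to make a negative-definite filling. The crucial choice is $m=n^2$: a tubular neighborhood $X$ of the capped-off disk in $\overline{\mathbb{C}P^2}$ carries all of $H_2(\overline{\mathbb{C}P^2})$, so its complement $W=\overline{\mathbb{C}P^2}\setminus\Int X$ is a \emph{rational homology $4$-ball} with $\partial W\cong S^3_{n^2}(K)$. That is much stronger than definiteness. From there the argument has two prongs. First, by Owens--Strle, a rational homology ball filling forces \emph{at least} $n$ of the correction terms $d(S^3_{n^2}(K),i)$ to vanish. Second, $d(S^3_{n^2}(K),i)=f_{n^2}(i)-2V_{\min(i,n^2-i)}(K)$ with $V_k\in\mathbb{Z}_{\geq 0}$, so such a vanishing is only possible when $f_{n^2}(i)=d(S^3_{n^2}(U),i)$ is an even integer; a short arithmetic check (Lemma~3.3) shows this happens for \emph{exactly} $n$ values of $i$, namely $i\in n\mathbb{Z}$ when $n$ is odd and $i\in\frac{n}{2}+n\mathbb{Z}$ when $n$ is even. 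The two counts match, so the vanishing set is pinned down exactly, and solving $0=f_{n^2}(i)-2V_i(K)$ for those $i\leq n^2/2$ gives the stated formulas on the nose. No second, ``dual'' filling is needed, and none of this rests on a genus/adjunction inequality.

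Two concrete problems with your version. First, the plan to get the upper bound from a positive-definite filling coming from $-K$ in $\punc\mathbb{C}P^2$ does not close the loop: that bounds quantities involving $V_k(-K)$, and $V_k(-K)$ is in general unrelated to $V_k(K)$ (this asymmetry is the whole point of $\nu^+$). So the ``matching upper and lower bounds'' step, as stated, has nothing to match. Second, the $n=0$ case is not a rational-homology-ball observation: $\punc\overline{\mathbb{C}P^2}$ has $H_2\cong\mathbb{Z}$ and is nowhere near a homology ball, even ``for $d$-invariant purposes.'' The paper instead invokes a separate result from \cite{sato} that a knot bounding a null-homologous disk in a negative-definite punctured $4$-manifold has $V_0=0$. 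So while your instinct about where the triangular numbers come from (they are exactly $f_{n^2}$ at the special $i$'s, i.e. the $d(S^3_{n^2}(U),i)/2$ values) and your parity bookkeeping are sound, the proof as proposed would only produce one-sided bounds, and the $n=0$ base case is not justified.
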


In fact, Theorem \ref{thm1} is an immediate consequence of Proposition \ref{prop cp2},
and this proposition is much stronger than Theorem \ref{thm1} when one wants obstructions to the existence of a positive full-twist between two knots. 
We will study the strength of Proposition \ref{prop cp2} in future work.
\begin{acknowledge}
The author was supported by JSPS KAKENHI Grant Number 15J10597.
The author would like to thank his supervisor, Tam\'{a}s K\'{a}lm\'{a}n
for his encouragement.
The author also would like to thank 
Jennifer Hom for her useful comments.
The proof of Proposition \ref{prop satellite} was derived in discussion with Hironobu Naoe.
\end{acknowledge}


\section{Preliminaries}

In this section, 
we recall some invariants derived from Heegaard Floer homology. 

\subsection{Correction terms}
Ozsv\'{a}th and Szab\'{o} \cite{ozsvath-szabo} introduced a $\mathbb{Q}$-valued invariant
$d$ (called the {\it correction term})
for rational homology 3-spheres endowed with a Spin$^c$ structure.
It is proved that the correction term is invariant under Spin$^c$ rational homology cobordism. In particular, the following proposition holds.

\begin{prop}
\label{prop2.1}
If a rational homology 3-sphere $Y$ bounds a rational homology 4-ball $W$,
then for any Spin$^c$ structure $\frak{s}$ over $W$, we have
$$
d(Y,\frak{s}|_{Y})=0,
$$
where $\frak{s}|_{Y}$ denotes the restriction of $\frak{s}$ to $Y$.
\end{prop}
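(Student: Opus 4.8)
The plan is to deduce the vanishing from Ozsv\'ath and Szab\'o's inequality \cite{ozsvath-szabo} bounding the correction term of a rational homology sphere in terms of the intersection form of a bounding negative-definite four-manifold: if $W$ is a negative-definite four-manifold with $\partial W = Y$ a rational homology sphere, then for every Spin$^c$ structure $\frak{s}$ on $W$,
$$
c_1(\frak{s})^2 + b_2^-(W) \leq 4\, d(Y, \frak{s}|_Y).
$$
A rational homology $4$-ball $W$ has $b_2(W)=0$, so it is vacuously negative-definite with $b_2^-(W)=0$; moreover $c_1(\frak{s}) \in H^2(W;\mathbb{Z})$ is torsion, so $c_1(\frak{s})^2 = 0$. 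Substituting into the displayed inequality gives $d(Y,\frak{s}|_Y) \geq 0$.

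I would then repeat the argument after reversing the orientation of $W$. The manifold $\overline{W}$ is again a rational homology $4$-ball, now with boundary $-Y$, and it still carries the Spin$^c$ structure $\frak{s}$; applying the inequality to $(\overline{W}, \frak{s})$ yields $d(-Y, \frak{s}|_{-Y}) \geq 0$. Since $d(-Y, \frak{t}) = -d(Y, \frak{t})$ under the natural identification of Spin$^c$ structures on $Y$ and $-Y$, this reads $d(Y, \frak{s}|_Y) \leq 0$. Combining the two inequalities gives $d(Y, \frak{s}|_Y) = 0$, as claimed.

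No step here is genuinely difficult; the only point demanding a little care is the bookkeeping of Spin$^c$ structures on $W$, $\overline{W}$, $Y$ and $-Y$, so that the two applications of the inequality really refer to the same restricted structure $\frak{s}|_Y$ on $Y$. As an alternative that sidesteps the definite-form inequality, one may delete an open $4$-ball from $W$ to obtain a Spin$^c$ rational homology cobordism from $S^3$, with its unique Spin$^c$ structure and $d(S^3)=0$, to $(Y, \frak{s}|_Y)$, and then quote the invariance of $d$ under Spin$^c$ rational homology cobordism, also established in \cite{ozsvath-szabo}. Either route reduces the proposition to facts already in the literature.
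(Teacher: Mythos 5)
Your argument is correct, and your closing paragraph already points at the route the paper actually takes: the paper cites invariance of $d$ under Spin$^c$ rational homology cobordism (remove an open ball from $W$ to get such a cobordism from $S^3$ to $Y$, then use $d(S^3)=0$), exactly your ``alternative.'' Your main argument is a genuinely different derivation: apply the Ozsv\'ath--Szab\'o negative-definite inequality $c_1(\frak{s})^2 + b_2^-(W) \leq 4\,d(Y,\frak{s}|_Y)$ once to $W$ and once to $\overline{W}$, noting that $b_2(W)=0$ makes $W$ vacuously negative definite and forces $c_1(\frak{s})$ to be torsion so that $c_1(\frak{s})^2 = 0$, then combine the two one-sided bounds via $d(-Y,\frak{t})=-d(Y,\frak{t})$. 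Both routes rest on results in the same Ozsv\'ath--Szab\'o paper; the cobordism-invariance route is a one-line citation and is what the paper uses, while your definite-form route is slightly more hands-on but has the pedagogical merit of making visible exactly which structural facts about $W$ (vanishing of $b_1$, $b_2$, torsion $c_1$) are being used. One small point worth flagging in your write-up: when you pass to $\overline{W}$ you should say explicitly that Spin$^c$ structures on a $4$-manifold and on its orientation reversal are canonically identified, and that $c_1(\frak{s})^2=0$ is orientation-independent here precisely because $c_1(\frak{s})$ is torsion; you gesture at this ``bookkeeping'' but it is the one place the argument could go wrong if stated carelessly.
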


\subsection{$V_k$-sequence and $\nu^+$-invariant}
The {\it $V_k$-sequence } is a family of $\mathbb{Z}_{\geq 0}$-valued knot concordance invariants $\{V_k(K)\}_{k \geq 0}$ defined by Ni and Wu \cite{ni-wu}. 
In particular, $\nu^+(K) := \min \{ k \geq 0 \mid V_k(K) =0 \}$ is known as the {\it $\nu^+$-invariant} \cite{hom-wu}.

In \cite{ni-wu}, Ni and Wu prove that the set $\{V_k(K)\}_{k \geq 0}$ determines all correction terms of the $p/q$-surgery along $K$ for any coprime $p,q>0$. 
Let $S^3_{p/q}(K)$ denote the $p/q$-surgery along $K$.
Note that there is a canonical identification between the set of Spin$^c$ structures over $S^3_{p/q}(K)$ and $\{ i \mid 0 \leq i \leq p-1 \}$.
(This identification can be made explicit by the procedure in \cite[Section 4, Section 7]{ozsvath-szabo4}.)

\begin{prop}[\text{\cite[Proposition 1.6]{ni-wu}}]
\label{prop2.2}
Suppose $p,q>0$, and fix $0 \leq i < p$. Then
$$
d(S^3_{p/q}(K),i) = d(S^3_{p/q}(O),i) - 2 
V_{\min \left\{ \lfloor \frac{i}{q} \rfloor, \lfloor \frac{p+q -1-i}{q} \rfloor \right\} }(K),
$$
where $O$ denotes the unknot and $\lfloor \cdot \rfloor$ is the floor function.
\end{prop}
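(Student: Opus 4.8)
The plan is to realize the disk $D$ by an explicit definite $4$-manifold bounding a surgery on $K$, and then to play Ozsv\'ath--Szab\'o's correction-term inequality for definite fillings against Ni--Wu's surgery formula (Proposition~\ref{prop2.2}). First I would isotope $\Int D$ into $\Int(\punc\overline{\mathbb{CP}^2})$ and record two facts. If $g$ denotes the generator of $H_2(\punc\overline{\mathbb{CP}^2})$ with $g\cdot g=-1$, then $[D,\partial D]=n\gamma$ forces $g\cdot D=\pm n$; and the framing that $D$ induces on $K=\partial D$ is $-n^2$ relative to the Seifert framing (seen, for instance, by blowing down the line $D$ meets and comparing with a Seifert surface). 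Attaching a $2$-handle to $\punc\overline{\mathbb{CP}^2}$ along $K$ with framing $m$ then gives $W_m$ with $b_1=0$, $\partial W_m=S^3_m(K)$, and intersection form $\left(\begin{smallmatrix}-1 & n\\ n & m-n^2\end{smallmatrix}\right)$ on the basis $\{g,e\}$, where $e=[D\cup(\text{core})]$ is represented by an embedded $2$-sphere $S$ with $S\cdot S=m-n^2$ and $g\cdot S=n$.

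The construction is designed so that $S$ becomes a $(\pm1)$-sphere for a suitable $m$. For $n\ge 1$, taking $m=n^2+1$ makes $S\cdot S=1$, and blowing $S$ down gives a negative-definite $W^{+}$ with $b_2=1$, form $\langle-(n^2+1)\rangle$ (namely $\langle g-ne\rangle$), and $\partial W^{+}=S^3_{n^2+1}(K)$. Dually, for $n\ge 2$, taking $m=n^2-1$ makes $S$ a $(-1)$-sphere whose blow-down is a positive-definite $W^{-}$ with $b_2=1$, form $\langle n^2-1\rangle$, and $\partial W^{-}=S^3_{n^2-1}(K)$. Since $b_1(W^{\pm})=0$, the maps $\Spin^c(W^{\pm})\to\Spin^c(S^3_{n^2\pm1}(K))$ are onto. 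Ozsv\'ath--Szab\'o's inequality applied to $W^{+}$ and to $-W^{-}$ then gives, for every Spin$^c$ structure on the respective surgery,
\[
d\bigl(S^3_{n^2+1}(K),\mathfrak s\bigr)\ \ge\ \max_{\mathfrak t\mapsto\mathfrak s}\frac{c_1(\mathfrak t)^2+1}{4},\qquad d\bigl(S^3_{n^2-1}(K),\mathfrak s'\bigr)\ \le\ \min_{\mathfrak t\mapsto\mathfrak s'}\frac{c_1(\mathfrak t)^2-1}{4},
\]
where, as $b_2=1$, each $c_1(\mathfrak t)^2$ is $\mp\,c_1(\mathfrak t)(x)^2/(n^2\pm1)$ for a generator $x$ of $H_2(W^{\pm})$, and the optimal $\mathfrak t$ over a fixed $\mathfrak s$ minimizes $|c_1(\mathfrak t)(x)|$ within a prescribed residue class modulo $2(n^2\pm1)$.

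Now I would substitute these inequalities into Proposition~\ref{prop2.2} with $q=1$, $p=n^2\pm1$, using the known correction terms of the lens spaces $S^3_{n^2\pm1}(O)$: the first inequality yields a family of upper bounds for the $V_k(K)$ with $0\le k\le\lfloor n^2/2\rfloor$, and the second a family of lower bounds. The remaining (quadratic) arithmetic then shows that at $k=nj$ when $n$ is odd, and at $k=\tfrac n2+nj$ when $n$ is even, the upper and lower bounds coincide with the value claimed in (2) and (3). For (1), $n=0$: the same construction with $m=1$ produces a $(+1)$-sphere $S=D\cup(\text{core})$ whose blow-down is a negative-definite $\langle-1\rangle$-filling of $S^3_1(K)$, so $d(S^3_1(K))\ge 0$; since Proposition~\ref{prop2.2} gives $d(S^3_1(K))=-2V_0(K)$, we get $V_0(K)=0$. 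The case $n=1$ (where $W^{-}$ degenerates) is handled likewise, using the $\langle-2\rangle$-filling of $S^3_2(K)$ together with $V_0(K)\ge0$.

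The conceptual part is the geometric construction above; the real work is in the last paragraph. The main obstacle is to pin down the restriction maps $\Spin^c(W^{\pm})\to\Spin^c(S^3_{n^2\pm1}(K))$ in terms of Ni--Wu's canonical labeling $\{0,\dots,p-1\}$ --- that is, to determine which residue class modulo $2(n^2\pm1)$, hence which value of $c_1^2$, is attached to each surgery Spin$^c$ structure after the handle attachment and blow-down --- and then to check that the two families of quadratic bounds agree exactly at the indices $nj$ and $\tfrac n2+nj$. This is bookkeeping rather than a conceptual difficulty, but it is what fixes every constant in the proposition.
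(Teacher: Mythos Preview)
Your proposal is not a proof of the stated Proposition~\ref{prop2.2} at all. Proposition~\ref{prop2.2} is Ni--Wu's surgery formula relating $d$-invariants of $p/q$-surgery to the $V_k$-sequence; in the paper it is simply quoted from \cite{ni-wu} with no proof, and your write-up makes no attempt to establish that formula either --- indeed you invoke it (``substitute these inequalities into Proposition~\ref{prop2.2}'') as a known input. What you have written is an outline of a proof of Proposition~\ref{prop cp2}, the statement about $V_k$ of a knot bounding a disk in $\punc\overline{\mathbb{CP}^2}$. So there is a mismatch between the target statement and the argument.

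Setting that aside and comparing your outline to the paper's actual proof of Proposition~\ref{prop cp2}: the paper takes a cleaner route. It observes (Lemma~\ref{lem3.1}) that $S^3_{n^2}(K)$ bounds a rational homology $4$-ball $W$, so $d$ vanishes on every Spin$^c$ structure that extends over $W$; Owens--Strle guarantees at least $n$ such structures, and a parity computation (Lemma~\ref{lem3.3}) shows there are \emph{exactly} $n$ labels $i$ for which $f_{n^2}(i)\in 2\mathbb{Z}$, forcing those to be the extending ones. One then reads off the exact values of $V_{nj}$ (or $V_{n/2+nj}$) from Corollary~\ref{cor2.3}. The key point is that the rational homology ball gives \emph{equalities} $d=0$ directly, with no matching of upper and lower bounds required.

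Your approach instead builds two separate definite fillings of $S^3_{n^2+1}(K)$ and $S^3_{n^2-1}(K)$ and tries to squeeze each $V_k$ between the resulting Ozsv\'ath--Szab\'o inequalities. This is more laborious and, as written, incomplete: you explicitly defer both the Spin$^c$ restriction bookkeeping and the verification that the two quadratic bounds actually coincide at the claimed indices. That coincidence is not automatic --- it would need to be checked that the optimal $c_1^2$ over each boundary Spin$^c$ structure, after identifying labels via the Ni--Wu convention, produces matching numbers on the $(n^2{+}1)$- and $(n^2{-}1)$-surgeries at exactly $k=nj$ (resp.\ $k=\tfrac{n}{2}+nj$). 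Until that arithmetic is carried out, the argument for Proposition~\ref{prop cp2} has a genuine gap; and in any case it does not address Proposition~\ref{prop2.2}.
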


Here we  modify the right hand side of the equality by using the fact that $\{V_k(K)\}_{k \geq 0}$ satisfy the inequality $V_{k+1}(K) \leq V_{k}(K)$ for each $k \geq 0$.
In particular, for integer surgeries, we have the following formula.
\begin{cor}
\label{cor2.3}
For any $p>0$ and $0 \leq i < p$, we have
$$
d(S^3_{p}(K),i) = d(S^3_{p}(O),i) - 2 
V_{\min\{ i,p-i\} }(K).
$$
\end{cor}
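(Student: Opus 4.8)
The plan is to derive Corollary~\ref{cor2.3} directly from Proposition~\ref{prop2.2} by specializing to $q = 1$, using that $p/1$-surgery on a knot is the same as $p$-surgery. First I would substitute $q = 1$ into the formula of Proposition~\ref{prop2.2}. The one computation to carry out is the simplification of the subscript of $V$: since $i$ and $p - i$ are integers, we have $\lfloor i/1 \rfloor = i$ and $\lfloor (p + 1 - 1 - i)/1 \rfloor = \lfloor p - i \rfloor = p - i$, so
$$
\min\left\{ \left\lfloor \tfrac{i}{q} \right\rfloor,\ \left\lfloor \tfrac{p + q - 1 - i}{q} \right\rfloor \right\} = \min\{\, i,\ p - i \,\} \qquad \text{when } q = 1.
$$
Substituting this and identifying $S^3_{p/1}(K)$ with $S^3_p(K)$ and $S^3_{p/1}(O)$ with $S^3_p(O)$ turns Proposition~\ref{prop2.2} into exactly the asserted identity. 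Since $0 \le i < p$ forces $\min\{i, p - i\} \ge 0$, the subscript is a legitimate index for the $V_k$-sequence, so nothing further is needed on that front.

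The only point that merits an explicit word is the compatibility of conventions: the labelling of Spin$^c$ structures on $S^3_p(K)$ by $\{\, i \mid 0 \le i \le p - 1 \,\}$ appearing in Corollary~\ref{cor2.3} should agree, under $q = 1$, with the labelling of Spin$^c$ structures on $S^3_{p/q}(K)$ used in Proposition~\ref{prop2.2}. This is built into the construction of those labellings in \cite[Section~4, Section~7]{ozsvath-szabo4}, which is uniform in $p/q$, so no genuine compatibility check is required beyond pointing to that reference.

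Consequently there is essentially no obstacle: the corollary is a one-line specialization. I would also note that the monotonicity $V_{k+1}(K) \le V_k(K)$ recalled just before Corollary~\ref{cor2.3} plays no role in this particular deduction — it is needed only to bring the original Ni--Wu formula into the $\min$-form of Proposition~\ref{prop2.2}; once that form is in hand, the corollary follows simply by evaluating two floor functions at $q = 1$. The only care required is routine bookkeeping with those floors and with the two Spin$^c$ labellings.
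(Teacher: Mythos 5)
Correct, and it is essentially the paper's own (implicit) argument: Corollary~\ref{cor2.3} is the $q=1$ specialization of Proposition~\ref{prop2.2}, and the two floor evaluations $\lfloor i/1\rfloor=i$ and $\lfloor (p+1-1-i)/1\rfloor=p-i$ are exactly the only computation required. Your side remark that the monotonicity $V_{k+1}\le V_k$ plays no role in this step and is instead what converts Ni--Wu's original formula into the $\min$-form quoted as Proposition~\ref{prop2.2} is a fair reading of the paper's slightly compressed wording.
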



\section{Proof of Proposition \ref{prop cp2}}

In this section, we prove Proposition \ref{prop cp2}.
We start from the following lemma.

\begin{lem}
\label{lem3.1}
Suppose that a knot $K$ satisfies the assumption of Proposition \ref{prop cp2}
and $n>0$.
then $S^3_{n^2}(K)$ bounds a rational homology 4-ball.
\end{lem}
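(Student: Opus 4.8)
The plan is to realize the desired rational homology $4$-ball by excising the disk $D$. Concretely, set $W:=\punc\overline{\mathbb{C}P^2}\setminus\Int\nu(D)$, where $\nu(D)$ is a closed tubular neighborhood of the properly embedded disk $D$. Since $D\cong D^2$, the neighborhood $\nu(D)$ is a trivial normal bundle, so $\nu(D)\cong D^2\times D^2$ with $\nu(D)\cap\partial(\punc\overline{\mathbb{C}P^2})$ a tubular neighborhood $\nu(K)$ of $K=\partial D$, while $W\cap\nu(D)$ is the complementary (``vertical'') solid torus $D^2\times S^1\subset\partial\nu(D)$. Hence
$$
\partial W=\bigl(S^3\setminus\Int\nu(K)\bigr)\cup_{T^2}(D^2\times S^1)=S^3_f(K),
$$
where the surgery slope $f$ is the framing that $D$ induces on $K$, because a meridian disk of the reglued solid torus is a parallel copy $D^2\times\{\mathrm{pt}\}$ of $D$, whose boundary is the $D$-framed pushoff of $K$.

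The first main point is that $f=n^2$. Capping $D$ off with a pushed-in Seifert surface of $K$ yields a closed surface $\widehat D\subset\punc\overline{\mathbb{C}P^2}$ which, under the isomorphism $H_2(\punc\overline{\mathbb{C}P^2};\mathbb{Z})\xrightarrow{\ \sim\ }H_2(\punc\overline{\mathbb{C}P^2},\partial;\mathbb{Z})$ (an isomorphism since $\partial=S^3$), corresponds to $[D,\partial D]=n\gamma$; as $\overline{\mathbb{C}P^2}$ is negative definite, this gives $[\widehat D]\cdot[\widehat D]=-n^2$. Comparing the geometric self-intersection of $\widehat D$ (push $D$ off itself in its normal direction, and twist a copy of the Seifert surface to match boundaries) then identifies the induced framing $f$ with $n^2$ up to sign, and the orientation conventions — the positive full-twist being $(-1)$-surgery, consistent with the appearance of $\overline{\mathbb{C}P^2}$ — pin down $f=n^2$. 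Thus $\partial W=S^3_{n^2}(K)$.

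It remains to show that $W$ is a rational homology $4$-ball, and this is where $n>0$ is used. Write $X=\punc\overline{\mathbb{C}P^2}$; it deformation retracts onto $\mathbb{C}P^1\cong S^2$, so $H_2(X;\mathbb{Z})\cong\mathbb{Z}$ and $H_1(X;\mathbb{Z})=H_3(X;\mathbb{Z})=0$. By excision for the decomposition $X=W\cup\nu(D)$ with $W\cap\nu(D)=D^2\times S^1$, we have $H_*(X,W;\mathbb{Z})\cong H_*\bigl(D^2\times D^2,\,D^2\times S^1;\mathbb{Z}\bigr)$, which is $\mathbb{Z}$ in degree $2$ and $0$ in every other positive degree. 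In the long exact sequence of the pair $(X,W)$ the map $H_2(X;\mathbb{Z})\to H_2(X,W;\mathbb{Z})$ is multiplication by $\pm([\mathbb{C}P^1]\cdot[D])=\pm n\neq 0$; together with $H_3(X,W)=H_1(X)=0$ this forces $H_2(W;\mathbb{Z})=0$ and $H_1(W;\mathbb{Z})\cong\mathbb{Z}/n\mathbb{Z}$, and $H_4(X,W)=H_3(X)=0$ gives $H_3(W;\mathbb{Z})=0$. Therefore $H_*(W;\mathbb{Q})\cong H_*(B^4;\mathbb{Q})$, as needed.

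The step I expect to be the real obstacle is the framing identification $\partial W=S^3_{n^2}(K)$ in the first two paragraphs: one must track orientation conventions carefully to obtain the slope $+n^2$ rather than, say, $-n^2$, and to see precisely how the relative class $n\gamma$ translates into a longitude of $K$. Once the boundary is correctly identified, the homology computation is routine, and the hypothesis $n\neq0$ is needed only to make $H_2(X)\to H_2(X,W)$ injective.
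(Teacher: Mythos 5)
Your proof is correct and takes essentially the same approach as the paper: remove a tubular neighborhood of $D$ from $\punc\overline{\mathbb{C}P^2}$, identify the boundary of the complement with $S^3_{n^2}(K)$ via the $-n^2$ self-intersection of the capped-off disk, and check that the complement has the rational homology of $B^4$. The paper phrases this slightly differently (it first forms $X = h^0\cup\widetilde{h^2}$, a handle picture of $B^4$ with a $(-n^2)$-framed 2-handle along $-K$, and takes $W = \overline{\mathbb{C}P^2}\setminus\Int X$), and it states the framing and homology facts without your more explicit long-exact-sequence computation, but the decomposition and the use of $n>0$ to kill $H_2(W;\mathbb{Q})$ are the same.
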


\begin{proof}
Consider a standard handle decomposition 
$\overline{\mathbb{C}P^2} = h^0 \cup h^2 \cup h^4$,
where $h^i$ denotes its unique $i$-handle for $i \in \{0,2,4\}$.
Think of $\punc \overline{\mathbb{C}P^2}$ as 
$\overline{\mathbb{C}P^2} \setminus \Int h^0$, and suppose that $K$ lies in 
$\partial(\overline{\mathbb{C}P^2} \setminus \Int h^0 ) =  -\partial h^0$.
Then we can think of a tubular neighborhood of $D$ (with orientation reversed) as an $(-n^2)$-framed 2-handle $\widetilde{h^2}$ attached to $h^0$ along $-K$ (see Figure \ref{X}).
Let $X := h^0 \cup \widetilde{h^2}$. Then $X$ is a codimension 0 sub-manifold of $\overline{\mathbb{C}P^2}$
which satisfies;
\begin{enumerate}
\item $H_*(X;\mathbb{Z}) \cong H_*(S^2;\mathbb{Z})$,
\item the induced map $i_*:H_2(X;\mathbb{Z}) \to H_2(\overline{\mathbb{C}P^2};\mathbb{Z})$
 from the inclusion is non-trivial, and
\item  $\partial X = S^3_{-n^2}(-K)$.
\end{enumerate}

These imply that for the exterior $W := \overline{\mathbb{C}P^2} \setminus \Int X$,
we have $\partial W = -S^3_{-n^2}(-K) \cong S^3_{n^2}(K)$ and 
$H_*(W;\mathbb{Q}) \cong H_*(B^4;\mathbb{Q})$.
\end{proof}

\begin{figure}[htbp]
\begin{center}
\includegraphics[ scale = 0.8]{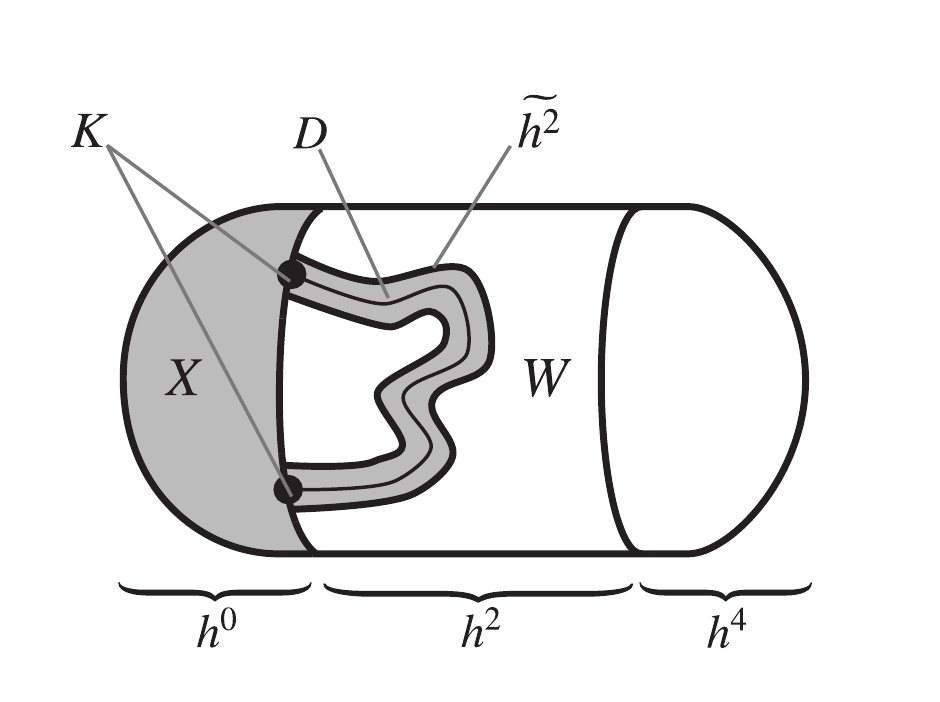}
\vspace{-5mm}
\caption{\label{X}}
\end{center}
\end{figure}

In light of Proposition \ref{prop2.2},
Lemma \ref{lem3.1} shows that the correction term of some Spin$^c$ structures over 
$S^3_{n^2}(K)$ are zero. In fact, it follows from \cite[Proposition 4.1]{owens-strle}.
that the number of such Spin$^c$ structures is at least $n$. 
\begin{lem}
\label{lem3.2}
If $S^3_{n^2}(K)$ bounds a rational homology 4-ball, 
then there exists a subset $S \subset \{ i \mid 0\leq i<n^2 \}$
such that $|S|=n$ and for any element $i \in S$, we have $d(S^3_{n^2}(K),i)=0$.
\end{lem}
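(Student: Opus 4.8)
This is essentially \cite[Proposition 4.1]{owens-strle}, and the plan is to deduce it from that result together with Proposition \ref{prop2.1}. Write $Y = S^3_{n^2}(K)$, so that $H_1(Y;\mathbb{Z}) \cong \mathbb{Z}/n^2\mathbb{Z}$ and, under the identification recalled in Section~2, the set $\Spin^c(Y)$ is labelled by $\{ i \mid 0 \le i < n^2 \}$ compatibly with its structure as a torsor over $H^2(Y;\mathbb{Z}) \cong \mathbb{Z}/n^2\mathbb{Z}$. Let $W$ be the rational homology $4$-ball with $\partial W = Y$ provided by the hypothesis.

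First I would recall the metabolizer input underlying \cite[Proposition 4.1]{owens-strle}: by the standard ``half lives, half dies'' argument, the subgroup $G := \ker\!\bigl(H_1(Y;\mathbb{Z}) \to H_1(W;\mathbb{Z})\bigr)$ is a metabolizer for the linking form of $Y$, so that $|G|^2 = |H_1(Y;\mathbb{Z})| = n^2$ and hence $|G| = n$; since $\mathbb{Z}/n^2\mathbb{Z}$ has a unique subgroup of order $n$, in fact $G = n\mathbb{Z}/n^2\mathbb{Z}$. Next I would analyse the restriction map $r \colon \Spin^c(W) \to \Spin^c(Y)$. It is equivariant over $r^* \colon H^2(W;\mathbb{Z}) \to H^2(Y;\mathbb{Z})$, so its image $\mathcal{T} := r(\Spin^c(W))$ is a single coset of $\image(r^*)$ inside $\Spin^c(Y)$, and it is nonempty since every smooth oriented $4$-manifold carries a $\Spin^c$ structure. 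Chasing the long exact sequence of the pair $(W,Y)$ and using Poincar\'e--Lefschetz duality identifies $\image(r^*)$ with $\ker\!\bigl(H_1(Y;\mathbb{Z}) \to H_1(W;\mathbb{Z})\bigr) = G$, whence $|\mathcal{T}| = |G| = n$.

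To conclude, for each label $i \in \mathcal{T}$ I would choose $\mathfrak{s} \in \Spin^c(W)$ restricting to the $\Spin^c$ structure labelled $i$; since $W$ is a rational homology $4$-ball, Proposition \ref{prop2.1} gives $d(Y,i) = d(Y, \mathfrak{s}|_Y) = 0$. Setting $S = \mathcal{T}$ then yields $|S| = n$ with $d(S^3_{n^2}(K),i) = 0$ for every $i \in S$; if one prefers to use only the inequality $|\mathcal{T}| \ge n$, any $n$-element subset of $\mathcal{T}$ serves equally well. The one genuinely delicate step is the metabolizer count $|G| = n$---that is, controlling the torsion in $H_*(W;\mathbb{Z})$ so that $G$ is exactly ``half'' of $H_1(Y;\mathbb{Z})$---and this is precisely the content I would import from \cite[Proposition 4.1]{owens-strle} rather than reprove, so that the argument reduces to assembling these standard ingredients.
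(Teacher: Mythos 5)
Your argument is correct and matches the paper's approach exactly: the paper simply cites \cite[Proposition 4.1]{owens-strle} for this lemma, and what you have written is a faithful unpacking of that result (metabolizer from the rational homology 4-ball, identifying the image of the restriction $\Spin^c(W)\to\Spin^c(Y)$ with the order-$n$ coset, then applying Proposition \ref{prop2.1}).
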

We will determine the subset $S$.
Let $f_p: \{i \mid 0 \leq i < p\} \to \mathbb{Q}$ be a map
defined by
$$
f_p(i)= \frac{-p+(p-2i)^2}{4p}.
$$
Actually, $f_p(i)$ is the value of $d(S^3_{p}(O),i)$.
By Corollary \ref{cor2.3}, we can see that if an element $i \in \{0 \leq i < n^2 \}$ belongs to
$S$, then $f_{n^2}(i)$ must be an even integer.
So we next observe when $f_{n^2}(i)$ is an even integer.
\begin{lem}
\label{lem3.3}
If $n$ is odd, then 
$$
f_{n^2}(i) \in 2\mathbb{Z} \Leftrightarrow i \in n\mathbb{Z}.
$$
If $n$ is even, then 
$$
f_{n^2}(i) \in 2\mathbb{Z} \Leftrightarrow i-\frac{n}{2} \in n\mathbb{Z}.
$$
\end{lem}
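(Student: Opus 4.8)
The plan is to treat the two parity cases separately and reduce each to an elementary congruence computation. Recall that $f_{n^2}(i) = \frac{-n^2 + (n^2-2i)^2}{4n^2}$. Setting $m = n^2 - 2i$, so that $m$ ranges over integers of the same parity as $n^2$ (equivalently as $n$) with $|m| < n^2$, we have $f_{n^2}(i) = \frac{m^2 - n^2}{4n^2}$. The condition $f_{n^2}(i) \in 2\mathbb{Z}$ becomes $m^2 - n^2 \in 8n^2\mathbb{Z}$, i.e. $m^2 \equiv n^2 \pmod{8n^2}$. So the whole lemma is the claim that, within the allowed range and parity, $m^2 \equiv n^2 \pmod{8n^2}$ holds exactly when $m \equiv \pm n \pmod{2n}$ (which translates back to $i \in n\mathbb{Z}$ when $n$ is odd, and $i \equiv n/2 \pmod n$ when $n$ is even; one checks these translations directly from $m = n^2 - 2i$).

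For the forward-hard direction, I would first show $m^2 \equiv n^2 \pmod{8n^2}$ forces $n \mid m$. Indeed $n^2 \mid m^2 - n^2$ gives $n^2 \mid m^2$; writing $d = \gcd(m,n)$ and $m = d m'$, $n = d n'$ with $\gcd(m',n')=1$, we get $n'^2 \mid m'^2$, hence $n' = 1$, so $n \mid m$. Write $m = n k$. Then $m^2 - n^2 = n^2(k^2-1) = n^2(k-1)(k+1)$, and the condition is $8 \mid (k-1)(k+1)$. Here the parity of $n$ enters: when $n$ is odd, $m = nk$ has the parity of $k$, and since $m$ must be odd (same parity as $n^2$, odd), $k$ is odd, so $(k-1)(k+1)$ is a product of two consecutive even numbers, automatically divisible by $8$; thus the congruence imposes no further constraint beyond $n \mid m$, and $n \mid m \Leftrightarrow n \mid (n^2 - 2i) \Leftrightarrow n \mid 2i \Leftrightarrow n \mid i$ (using $n$ odd). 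When $n$ is even, $m = nk$ is automatically even (consistent with $n^2$ even), and now $k$ may be odd or even: if $k$ is even then $(k-1)(k+1)$ is odd, so $8 \nmid (k-1)(k+1)$; if $k$ is odd then again $8 \mid (k-1)(k+1)$. Hence for $n$ even the condition is precisely that $k = m/n$ is odd, i.e. $m \equiv n \pmod{2n}$, i.e. $n^2 - 2i \equiv n \pmod{2n}$, i.e. $2i \equiv n^2 - n \equiv -n \pmod{2n}$ (since $n^2 \equiv 0 \pmod{2n}$ as $n$ is even, wait — $n^2 = n \cdot n$ and $n$ even makes $n^2 \equiv 0 \pmod{2n}$), giving $2i \equiv n \pmod{2n}$, hence $i \equiv n/2 \pmod n$.

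For the reverse (easy) direction I would just substitute: if $i \in n\mathbb{Z}$ with $n$ odd, then $m = n^2 - 2i = n(n - 2i/n)$ is $n$ times an odd integer, and the computation above shows $f_{n^2}(i)$ is an even integer; if $i \equiv n/2 \pmod n$ with $n$ even, write $i = n/2 + n t$, so $m = n^2 - n - 2nt = n(n - 1 - 2t)$ with $n - 1 - 2t$ odd, and again $8n^2 \mid m^2 - n^2$, so $f_{n^2}(i) \in 2\mathbb{Z}$. I do not expect any serious obstacle here; the only thing to be a little careful about is bookkeeping the parity of $m$ and of $k$, and making sure the translation between the $m$-congruence and the $i$-congruence is done correctly in each parity case (in particular that in the even case $n^2 \equiv 0 \pmod{2n}$, which is what makes the $i$-condition come out as $i \equiv n/2$ rather than something shifted). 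The main point to get right is the split "$k$ odd vs.\ $k$ even" for even $n$, since that is exactly where the $n/2$-shift in the statement comes from.
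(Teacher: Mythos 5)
Your proof is correct, and it takes a genuinely different route from the paper's. The paper's argument rests on two pairs of algebraic identities for $f_{n^2}(i)$: one exhibiting $f_{n^2}(i)$ as a product of two consecutive integers (giving evenness immediately when the condition holds), and one exhibiting it as an integer plus $(i/n)^2$ (or $x + x^2$ with $x = (i-n/2)/n$), which forces integrality of $(i/n)^2$ (resp.\ $x+x^2$) and hence the stated divisibility of $i$. Your approach instead substitutes $m = n^2 - 2i$, reduces the statement to the single congruence $m^2 \equiv n^2 \pmod{8n^2}$, proves $n \mid m$ by a gcd argument from $n^2 \mid m^2$, and then, after writing $m = nk$, isolates the condition $8 \mid (k-1)(k+1)$ and sorts it out by the parity of $k$. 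The paper's method is tighter and more "computational" — it hands you the answer as soon as you see the right factorization — while yours is more modular-arithmetic in flavor and makes the role of the factor $8$ and the even/odd split of $k$ conceptually transparent; in particular your approach explains \emph{why} the $n/2$-shift appears in the even case (it is exactly the requirement that $k = m/n$ be odd), which the paper's identities encode but do not foreground. Both arguments are complete; the one point worth tidying in yours is the parenthetical self-correction about $n^2 \equiv 0 \pmod{2n}$, which is of course correct for $n$ even but reads as hesitation — it should just be stated.
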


\def\proofname{Proof}

\begin{proof}
We first assume that $n$ is odd.
It is easy to check that the following equalities hold;
\begin{eqnarray}
f_{n^2}(i)&=&\left( \frac{n-1}{2} - \frac{i}{n} \right) \left( \frac{n-1}{2} - \frac{i}{n} +1 \right) \\
\ &=& \frac{(n+1)(n-1)}{4} -i + \left( \frac{i}{n} \right)^2.
\end{eqnarray}
Then the equality (1) implies that $f_{n^2}(i) \in 2\mathbb{Z}$ if $i \in n \mathbb{Z}$,
while the equality (2) implies that $f_{n^2}(i) \in \mathbb{Z}$ only if $(i/n)^2 \in \mathbb{Z}$.
Since $(i/n)^2 \in \mathbb{Z}$ if and only if $i \in n \mathbb{Z}$,
this proves Lemma \ref{lem3.3} for odd $n$ .

Next we assume that $n$ is even.
We can see that the following equalities also hold;
\begin{eqnarray}
f_{n^2}(i)&=&
\left( \frac{n}{2} - \frac{i-\frac{n}{2}}{n} \right) 
\left( \frac{n}{2} - \frac{i-\frac{n}{2}}{n} -1 \right) \\
\ &=& \left(\frac{n}{2} \right)^2 -i + \left(\frac{i-\frac{n}{2}}{n} \right) 
+ \left( \frac{i-\frac{n}{2}}{n} \right)^2.
\end{eqnarray}
Let $x = (i-n/2)/n$. Then the equality (3) implies that $f_{n^2}(i) \in 2\mathbb{Z}$ if $x \in \mathbb{Z}$,
while the equality (4) implies that $f_{n^2}(i) \in \mathbb{Z}$ only if $x + x^2 \in \mathbb{Z}$.
Furthermore,  it is not hard to verify that 
$x+x^2 \in \mathbb{Z}$ if and only if $x \in \mathbb{Z}$.
This completes the proof.
\end{proof}

Now we have 
$$
S \subset \{  0 \leq i < n^2 \mid f_{n^2}(i) \in 2\mathbb{Z} \}
=
\left\{
\begin{array}{ll}
\{ 0 \leq i < n^2 \mid i \in n\mathbb{Z} \}& (n \text{: odd})\\
\ & \ \\
\{0 \leq i < n^2 \mid i - n/2 \in n\mathbb{Z} \}& (n \text{: even})
\end{array}
\right.
.
$$
However, for any $n$, the order of the rightmost set is $n$.
This implies
$$
S =
\left\{
\begin{array}{ll}
\{ 0 \leq i < n^2 \mid i \in n\mathbb{Z} \}& (n \text{: odd})\\
\ & \ \\
\{ 0 \leq i < n^2 \mid i - n/2 \in n\mathbb{Z} \}& (n \text{: even})
\end{array}
\right.
.
$$
Hence, combining the above three lemmas, we obtain the following lemma.
\begin{lem}
\label{lem3.4}
Suppose that a knot $K$ satisfies the assumption of Proposition \ref{prop cp2}
and $n>0$.
If $n$ is odd, then we have 
$$
d(S^3_{n^2}(K),nj)=0 \ (0 \leq ^{\forall}j \leq n-1).
$$
If $n$ is even, then  we have
$$
d(S^3_{n^2}(K),\frac{n}{2} + nj)=0 \ (0 \leq ^{\forall}j \leq n-1).
$$
\end{lem}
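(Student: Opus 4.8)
The plan is to obtain Lemma \ref{lem3.4} by combining the three preceding lemmas with Corollary \ref{cor2.3} and then matching cardinalities. Since $n>0$, Lemma \ref{lem3.1} gives that $S^3_{n^2}(K)$ bounds a rational homology $4$-ball, so Lemma \ref{lem3.2} produces a subset $S\subset\{i\mid 0\le i<n^2\}$ with $|S|=n$ and $d(S^3_{n^2}(K),i)=0$ for every $i\in S$. The goal is to pin down $S$ exactly.

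The next step is to extract the arithmetic constraint on $S$. By Corollary \ref{cor2.3},
$$
d(S^3_{n^2}(K),i)=f_{n^2}(i)-2V_{\min\{i,\,n^2-i\}}(K)
$$
for all $0\le i<n^2$, so the vanishing forces $f_{n^2}(i)=2V_{\min\{i,\,n^2-i\}}(K)$; since every $V_k(K)$ is a non-negative integer, this means $f_{n^2}(i)\in 2\mathbb{Z}$ whenever $i\in S$. Hence $S\subset\{0\le i<n^2\mid f_{n^2}(i)\in 2\mathbb{Z}\}$, and Lemma \ref{lem3.3} identifies the latter set with $\{0\le i<n^2\mid i\in n\mathbb{Z}\}$ for odd $n$ and with $\{0\le i<n^2\mid i-n/2\in n\mathbb{Z}\}$ for even $n$.

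Finally, each of these candidate sets has exactly $n$ elements: $\{nj\mid 0\le j\le n-1\}$ in the odd case and $\{n/2+nj\mid 0\le j\le n-1\}$ in the even case. Since $S$ has cardinality $n$ and sits inside an $n$-element set, it must coincide with it, which yields the claimed vanishing of $d(S^3_{n^2}(K),nj)$ (resp. $d(S^3_{n^2}(K),n/2+nj)$) for $0\le j\le n-1$. I expect the only real point requiring care to be this last matching: one needs the lower bound $|S|\ge n$ coming from Owens--Strle via Lemma \ref{lem3.2} to meet exactly the upper bound $|S|\le n$ coming from the arithmetic of $f_{n^2}$, so that the counting forces equality rather than mere containment; everything else is a direct substitution into Corollary \ref{cor2.3}.
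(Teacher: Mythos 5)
Your proposal is correct and mirrors the paper's own argument exactly: use Lemma \ref{lem3.1} and Lemma \ref{lem3.2} to obtain an $n$-element set $S$ of vanishing correction terms, apply Corollary \ref{cor2.3} to force $f_{n^2}(i)\in 2\mathbb{Z}$ on $S$, invoke Lemma \ref{lem3.3} to identify the candidate set, and conclude by the cardinality match $|S|=n$. Nothing is missing.
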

Now let us prove Proposition \ref{prop cp2}.
\def\proofname{Proof of Proposition \ref{prop cp2}}

\begin{proof}
We first suppose that $n=0$. Then, the knot $K$ bounds a null-homologous disk in $\punc \overline{\mathbb{C}P^2}$, whose intersection form is negative definite. Such a knot is studied in author's paper \cite{sato}, and it is proved that $V_0(K)=0$. Hence the assertion (1) of Proposition \ref{prop cp2} holds.

To prove the other two assertions, we use the equality
\begin{equation}
d(S^3_{n^2}(K),i) = f_{n^2}(i) -2V_{i}(K) \ (0 \leq ^{\forall} i \leq n^2/2),
\end{equation}
which is obtained by Corollary \ref{cor2.3}.
We first consider the case where $n>0$ is odd and $0 \leq j \leq (n-1)/2$. Then the inequalities $0 \leq nj \leq n(n-1)/2 < n^2/2$
hold, and hence the equality (5) and Lemma \ref{lem3.4} give 
$$
0 = f_{n^2}(nj) -2V_{nj}(K).
$$
This equality and the equality (1) prove the assertion (2) of Proposition \ref{prop cp2}. 

We next consider the case where $n>0$ is even and $0 \leq j \leq n/2 -1$. Then the inequalities
$0 \leq n/2 + nj \leq n^2/2 -n/2 < n^2/2$ hold, and hence the equality (5) and 
Lemma~\ref{lem3.4}
give
$$
0 = f_{n^2}(\frac{n}{2} + nj) -2V_{\frac{n}{2} +nj}(K).
$$
This equality and the equality (3) prove the assertion (3) of Proposition \ref{prop cp2}.
\end{proof}

\begin{remark}
We can prove Lemma \ref{lem3.4}
by taking suitable Spin$^c$ structures over $\overline{\mathbb{C}P^2}$ 
and restricting them to $W$.
This alternate proof seems to be natural rather than the original proof,
while the original proof also shows that any other correction term of 
$S^3_{n^2}(K)$ is not lying even in $\mathbb{Z}$. 
\end{remark}


\section{Proof of full-twist inequalities}

\def\proofname{Proof of Theorem \ref{thm1}}

In this section, we prove Theorem \ref{thm1} and Theorem \ref{thm2}.
We first prove Theorem \ref{thm1}.

\begin{proof}
By the assumption of Theorem \ref{thm1} and the definition of a full-twist operation,
there exists a disk $D$ in $S^3$ which intersects $K$ in its interior, and after $(-1)$-surgery along $\partial D$, we obtain $J$ from $K$.
Consider a standard handle decomposition 
$\overline{\mathbb{C}P^2} = h^0 \cup h^2 \cup h^4$.
Then we have an annulus $A$ properly embedded in 
$\overline{\mathbb{C}P^2} \setminus (\Int h^0 \cup \Int h^4)$
such that 
$(\overline{\mathbb{C}P^2} \setminus (\Int h^0 \cup \Int h^4),A)$ is a pair of cobordism 
from $(\partial h^0, K)$ to 
$(\partial(\overline{\mathbb{C}P^2} \setminus \Int h^4), J)$ (see Figure \ref{A}).
Furthermore, the annulus $A$ induces an annulus $A'$ in 
$\overline{\mathbb{C}P^2} \setminus (\Int h^0 \cup \Int h^4)$, which connects $(\partial h^0 ,K\# (-K))$ to 
$(\partial(\overline{\mathbb{C}P^2} \setminus \Int h^4), J \# (-K))$ (Figure \ref{A'}).
Since $K \# (-K)$ bounds a disk $D'$ in $h^0 \cong B^4$,
by gluing $D'$ with $A'$ along $K\#(-K)$, we have a disk $D''$ in 
$\partial(\overline{\mathbb{C}P^2} \setminus \Int h^4$) with boundary $J \# (-K)$.
Here we note that the disk $D''$ represents  
$n\gamma \in H_2(\overline{\mathbb{C}P^2} \setminus h^4, 
\partial (\overline{\mathbb{C}P^2} \setminus h^4);\mathbb{Z})$
for a generator $\gamma$ and $n= \lk(K, \partial D) \geq 0$.

\begin{figure}[tbp]
\begin{minipage}[]{0.4\hsize}
\hspace{-8mm}
\includegraphics[scale = 0.65]{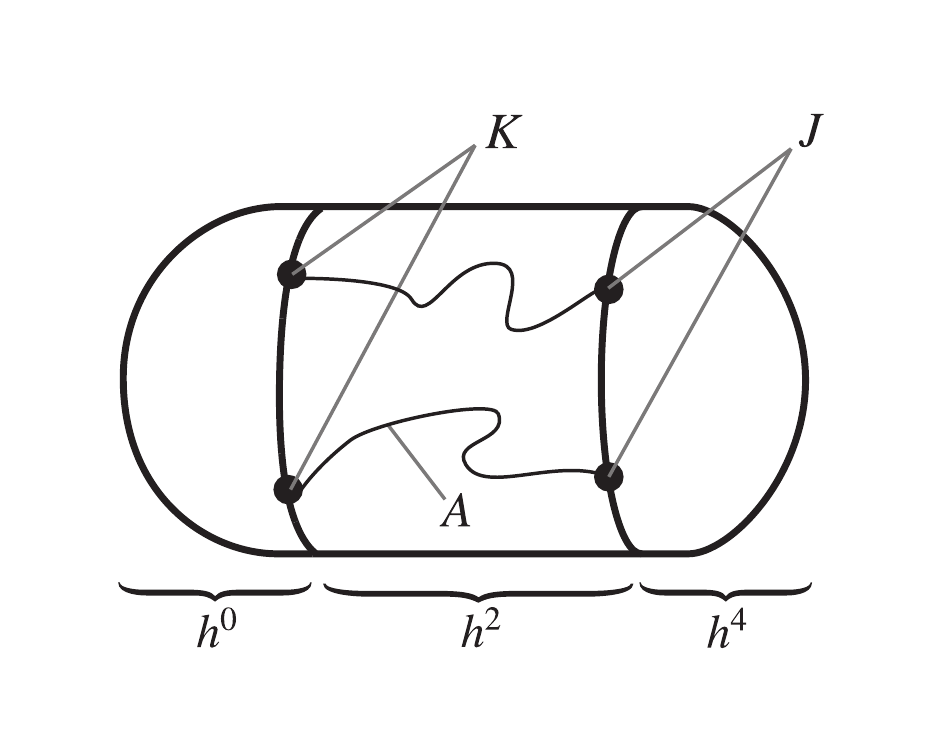}
\vspace{-10mm}
\caption{\label{A}}
 \end{minipage}
\hspace{4mm}
\begin{minipage}[]{0.4\hsize}
\hspace{-8mm}
\includegraphics[scale = 0.65]{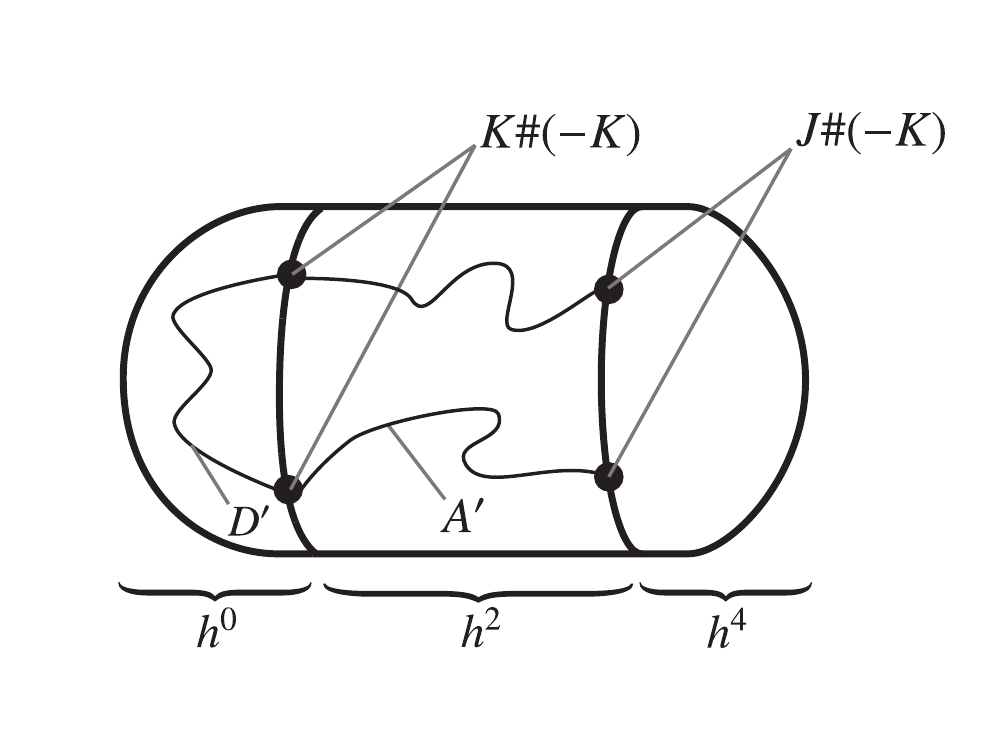}
\vspace{-10mm}
\caption{\label{A'}}
 \end{minipage}
\end{figure}

Now we can apply Proposition \ref{prop cp2} to the pair $(D'',J \# (-K))$.
In particular, if $n=0$, then $V_0(J\#(-K)) = 0$, and we have
$$
\nu^+(J\#(-K)) = \min\{k \geq 0 \mid V_k(J\#(-K)) = 0\} = 0.$$ 
We consider the case where $n$ is odd. 
Then for any $0 \leq j \leq \frac{n-1}{2}$, we have
$$
V_{nj}(J\#(-K)) = \frac{1}{2}\left(\frac{n-1}{2}-j \right) \left(\frac{n-1}{2}-j +1 \right).
$$
In particular, the equality for $j = (n-1)/2$ gives $V_{\frac{n(n-1)}{2}}(J \#(-K)) = 0$.
Moreover, if $n>1$, then the equality for $j = (n-3)/2$ gives
$V_{\frac{n(n-3)}{2}}(J \#(-K)) = 1$.
These imply that for $n>1$, we have
$$
\frac{(n-1)(n-2)}{2} = \frac{n(n-3)}{2}+1 \leq \nu^+(J \#(-K))  \leq \frac{n(n-1)}{2}.
$$
On the other hand, if $n=1$, then the equality $V_{\frac{n(n-1)}{2}}(J \#(-K)) = 0$ directly shows
$$
\frac{(n-1)(n-2)}{2} = 0 \leq \nu^+(J\#(-K)) \leq \frac{n(n-1)}{2}.
$$

We next consider the case where $n$ is even and $n>0$.
In this case, Proposition \ref{prop cp2} gives 
$$
V_{\frac{n}{2} + nj}(K) = \frac{1}{2} \left( \frac{n}{2}-j \right) \left( \frac{n}{2}-j-1 \right)
$$
for any $0 \leq j \leq \frac{n}{2}-1$.
In particular, the equality for $j = n/2-1$ gives $V_{\frac{n^2}{2} - \frac{n}{2}}(J \#(-K)) = 0$,
Moreover, if $n>2$, then the equality for $j = n/2 -2$ gives
$V_{\frac{n^2}{2} - \frac{3n}{2}}(J \#(-K)) = 1$.
These imply that for $n>2$, we have
$$
\frac{(n-1)(n-2)}{2} = \frac{n^2}{2} - \frac{3n}{2}+1 
\leq \nu^+(J \#(-K))  \leq \frac{n^2}{2} - \frac{n}{2} = \frac{n(n-1)}{2}.
$$
On the other hand, if $n=2$, then the equality $V_{\frac{n^2}{2} - \frac{n}{2}}(J \#(-K)) = 0$ 
directly shows
$$
\frac{(n-1)(n-2)}{2} = 0 \leq \nu^+(J\#(-K)) \leq \frac{n^2}{2} - \frac{n}{2} = \frac{n(n-1)}{2}.
$$
This completes the proof
\end{proof}

We next prove Theorem \ref{thm2}. To prove it, we use the following theorem.

\begin{thm}[\text{\cite[Theorem 1.4]{bodnar-celoria-golla}}]
\label{thm BCG}
For any two elements $x,y \in \mathcal{C}_{\nu^+}$, we have
$$
\nu^+(x+y) \leq \nu^+(x) + \nu^+(y).
$$
\end{thm}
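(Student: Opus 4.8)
The assertion is the subadditivity of $\nu^+$. Since every class in $\mathcal C_{\nu^+}$ is $[K]_{\nu^+}$ for a knot $K$, since $\nu^+$ descends to $\mathcal C_{\nu^+}$, and since $x+y=[K_1\#K_2]_{\nu^+}$ when $x=[K_1]_{\nu^+}$ and $y=[K_2]_{\nu^+}$, it suffices to prove $\nu^+(K_1\#K_2)\le\nu^+(K_1)+\nu^+(K_2)$ for knots in $S^3$. Recalling $\nu^+(K)=\min\{k\ge0:V_k(K)=0\}$, this would follow at once from the sharper inequality
\begin{equation*}
V_{k+l}(K_1\#K_2)\le V_k(K_1)+V_l(K_2)\qquad(k,l\ge0),
\end{equation*}
applied with $k=\nu^+(K_1)$ and $l=\nu^+(K_2)$, so that the right-hand side vanishes. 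So the plan is to prove this $V_k$-inequality.

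I would work with the full knot complex and the Ozsv\'ath--Szab\'o K\"unneth formula $CFK^\infty(K_1\#K_2)\simeq CFK^\infty(K_1)\otimes_{\mathbb F[U,U^{-1}]}CFK^\infty(K_2)$, under which the algebraic ($i$) and Alexander ($j$) filtration levels add. Write $A^+_m(K)=C\{\max(i,j-m)\ge0\}$ and $B^+(K)=C\{i\ge0\}$ for $C=CFK^\infty(K)$, and let $v^+_m\colon A^+_m(K)\to B^+(K)\cong CF^+(S^3)$ be the projection; then $V_m(K)$ is the nonnegative integer with $(v^+_m)_*|_{\mathcal T^+}=U^{V_m(K)}$ after identifying both towers with $\mathcal T^+$, so in particular $V_m(K)=0$ exactly when the Maslov grading $0$ tower element of $H_*(A^+_m(K))$ maps to the grading $0$ generator $\mathbf 1$ of $H_*(B^+(K))$. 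First I fix, for $i=1,2$, a cycle $z_i\in A^+_{m_i}(K_i)$ (with $m_1=k$, $m_2=l$) of Maslov grading $0$ whose image $v^+_{m_i}(z_i)$ represents $\mathbf 1\in H_*(B^+(K_i))$. The idea is to push the tensor $z_1\otimes z_2$ forward: it is a cycle in $CFK^\infty(K_1\#K_2)$, hence its image in the quotient complex $A^+_{k+l}(K_1\#K_2)=C\{\max(i,j-(k+l))\ge0\}$ is automatically a cycle there, of Maslov grading $0$. What must be checked is that its further image under $v^+_{k+l}$ still represents $\mathbf 1\in H_*(B^+(K_1\#K_2))$; granting this, grading considerations identify the class with the tower bottom and give $V_{k+l}(K_1\#K_2)=0$ (and $\le V_k(K_1)+V_l(K_2)$ in the general version).

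The main obstacle is precisely this compatibility. Splitting $z_i=z_i^{\ge0}+z_i^{<0}$ into its part supported in $\{i\ge0\}$ and its part supported in $\{i<0,\ j\ge m_i\}$, one has $v^+_{m_i}(z_i)=z_i^{\ge0}$, a cycle in $B^+(K_i)$ representing $\mathbf 1$, and the ``clean'' term $z_1^{\ge0}\otimes z_2^{\ge0}$ maps to $\mathbf 1$ under the standard multiplication $B^+(K_1)\otimes B^+(K_2)\to B^+(K_1\#K_2)$ sending $\mathbf 1\otimes\mathbf 1$ to $\mathbf 1$. However $v^+_{k+l}(z_1\otimes z_2)$ also carries ``mixed'' terms (for instance pieces of $z_1^{<0}\otimes z_2^{\ge0}$ with total $i\ge0$), because the region $\{\max(i,j-(k+l))\ge0\}$ is strictly larger than the product of the two regions defining $A^+_k(K_1)$ and $A^+_l(K_2)$, and one must show these mixed terms do not destroy the homology class. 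I expect to handle this by using the hypotheses $V_{m_i}(K_i)=0$ to choose the $z_i$ adapted to the truncation -- e.g.\ with $\partial z_i$ supported in $C_i\{i<0,\ j\ge m_i\}$ and, after a further correction inside $C_i\{i\ge0,\ j\ge m_i\}$, with $z_i^{\ge0}$ equal to a fixed minimal cycle representing $\mathbf 1$ -- so that the mixed terms are forced either to vanish or to be boundaries in $B^+(K_1\#K_2)$. An alternative route, avoiding the chain-level bookkeeping, is to pass to large integer surgeries: there $H_*(A^+_N(K))\cong HF^+(S^3_N(K))$ and $V_k$ is recovered from $d(S^3_N(K),k)$ via Corollary~\ref{cor2.3}; one then combines the K\"unneth formula for $HF^+$ with a negative-definite cobordism obtained by fusing the two surgery knots along a band and the Ozsv\'ath--Szab\'o $d$-invariant inequality for negative-definite cobordisms, the $V_k$-subadditivity following from additivity of $d$ under connected sum once the Spin$^c$ bookkeeping is arranged. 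Either way, specializing to $k=\nu^+(K_1)$ and $l=\nu^+(K_2)$ completes the proof.
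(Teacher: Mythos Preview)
The paper does not prove this theorem at all: it is quoted verbatim from \cite[Theorem~1.4]{bodnar-celoria-golla} and used as a black box (in the proofs of Theorem~\ref{thm2}, Lemma~\ref{lem partial1}, and Lemma~\ref{lem partial3}). So there is no ``paper's own proof'' to compare against, and your write-up already contains more than the paper does. Your reduction to the $V_k$-inequality $V_{k+l}(K_1\#K_2)\le V_k(K_1)+V_l(K_2)$ is exactly right; indeed that inequality is itself quoted later in the paper as Proposition~\ref{prop BCG} (again from \cite{bodnar-celoria-golla}), and specializing it at $k=\nu^+(K_1)$, $l=\nu^+(K_2)$ gives the theorem immediately.

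That said, as a self-contained proof your proposal has a genuine gap, and you flag it yourself. The step ``what must be checked is that its further image under $v^+_{k+l}$ still represents $\mathbf 1$'' is the entire content of the argument, and you do not carry it out: you write ``I expect to handle this by\ldots'' and then describe a hoped-for normalization of the $z_i$ without verifying that such representatives exist or that the mixed terms $z_1^{<0}\otimes z_2^{\ge0}$ and $z_1^{\ge0}\otimes z_2^{<0}$ become boundaries in $B^+$. The alternative surgery/cobordism route is likewise only gestured at (``once the Spin$^c$ bookkeeping is arranged''). If you want an honest proof, the clean way is to work on the minus side: $A^-_s(K)=C\{\max(i,j-s)\le 0\}$ is a \emph{subcomplex}, and under K\"unneth one has a genuine inclusion $A^-_k(K_1)\otimes A^-_l(K_2)\hookrightarrow A^-_{k+l}(K_1\#K_2)$ compatible with the inclusions into $B^-$; comparing the $U$-torsion of the towers then gives $V_{k+l}\le V_k+V_l$ without any mixed-term analysis. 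This is essentially how \cite{bodnar-celoria-golla} argue.
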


\def\proofname{Proof of Theorem \ref{thm2}}

\begin{proof}
Under the assumption of Theorem \ref{thm2} for $n>0$, it follows from Theorem \ref{thm1}
and Theorem \ref{thm BCG} that
$$
\frac{(n-1)(n-2)}{2} \leq \nu^+(J\#(-K)) \leq \nu^+(J) + \nu^+(-K)
$$
and
$$
\nu^+(J)=\nu^+(J\#(-K)\#K) \leq \nu^+(J\#(-K)) + \nu^+(K) \leq 
\frac{n(n-1)}{2} + \nu^+(K).
$$
On the other hand, if $n=0$, we have
$$
\nu^+(J)=\nu^+(J\#(-K)\#K) \leq \nu^+(J\#(-K)) + \nu^+(K) = \nu^+(K).
$$
These complete the proof.
\end{proof}


\section{4-ball genus bound for positive cable knots}

In this section, we prove Theorem \ref{thm3} and Corollary \ref{cor2}. 
Before proving Theorem \ref{thm3}, we note that it has been proved in \cite{wu} that the equality in Theorem \ref{thm3} holds for sufficiently large $q$ relative to $p$.

\begin{thm}[\text{\cite[Theorem 1.1]{wu}}]
\label{thm wu}
Let $K$ be a knot and
$p,q>0$ coprime integers with $q \geq (2\nu^+(K)-1)p -1$.
Then we have
$$
\nu^+(K_{p,q})=p\nu^+(K)+ \frac{(p-1)(q-1)}{2}.
$$
\end{thm}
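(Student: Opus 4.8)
The plan is to combine the lower bound, which is already in hand, with a surgery-formula computation that supplies the matching upper bound. By Theorem \ref{thm3} we have $\nu^+(K_{p,q}) \ge p\nu^+(K) + \tfrac{(p-1)(q-1)}{2}$ with \emph{no} restriction on $q$, so it suffices to prove the reverse inequality under the hypothesis $q \ge (2\nu^+(K)-1)p - 1$; equivalently, to show $V_{m_0}(K_{p,q}) = 0$, where $m_0 := p\nu^+(K) + \tfrac{(p-1)(q-1)}{2}$. Fixing any integer $N > 2m_0$, Corollary \ref{cor2.3} identifies $V_{m_0}(K_{p,q}) = 0$ with the single correction-term equality $d\bigl(S^3_N(K_{p,q}),m_0\bigr) = d\bigl(S^3_N(O),m_0\bigr)$, so everything reduces to computing the left-hand side for one well-chosen $N$.

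The engine of that computation is the classical description of Dehn surgery on cable knots: for $N$ chosen in the appropriate family, Gordon's work presents $S^3_N(K_{p,q})$ — up to orientation-preserving diffeomorphism and a connect-summed lens space — as a Dehn surgery $S^3_r(K)$ on the companion. Running the same identification for the unknot (for which the $(p,q)$-cable is $T_{p,q}$), applying Ni--Wu's Proposition \ref{prop2.2} (resp. Corollary \ref{cor2.3}) to the two resulting surgeries, and subtracting, the lens-space contributions cancel and one is left with a relation of the shape
$$V_{m_0}(K_{p,q}) = V_{m_0}(T_{p,q}) + V_{k}(K),$$
in which the index $k = k(m_0;p,q)$ is read off by tracking which $\Spin^c$ structure on $S^3_N(K_{p,q})$ computes $V_{m_0}$, following it through the cabling diffeomorphism and the connect-sum splitting, and feeding the result into the $\min$ and the floor functions of Proposition \ref{prop2.2}. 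Since $\nu^+(T_{p,q}) = \tfrac{(p-1)(q-1)}{2} \le m_0$, the first term vanishes, so it remains only to show $V_k(K) = 0$, i.e. $k \ge \nu^+(K)$.

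The content of the bound $q \ge (2\nu^+(K)-1)p - 1$ is exactly that it forces $k(m_0;p,q) \ge \nu^+(K)$: unwinding the $\Spin^c$ correspondence and the floor arithmetic, the index $k$ is pushed above $\nu^+(K)$ precisely when $q$ reaches this threshold, whereas for smaller $q$ it falls short (compatibly with the fact that Theorem \ref{thm3} gives only an inequality in that range). I expect the genuinely delicate point — the main obstacle — to be exactly this piece of bookkeeping: identifying the $\Spin^c$ structure that computes $V_{m_0}$, matching it through the cabling diffeomorphism and the lens-space summand, and then carrying out the floor-function estimate showing $k \ge \nu^+(K)$ under the stated hypothesis; once that is settled, the rest is formal manipulation of the correction-term identities recalled above. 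As an alternative packaging the same information, one may instead invoke a structure theorem computing $A_s^+(K_{p,q})$ — equivalently $CFK^\infty(K_{p,q})$ — in terms of $K$; this description stabilizes once $q$ is large relative to $p$ and $\nu^+(K)$, with $q \ge (2\nu^+(K)-1)p - 1$ again exactly the hypothesis that validates it, after which $\nu^+(K_{p,q})$ is read off directly.
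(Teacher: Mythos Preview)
The paper does not prove this statement at all: Theorem~\ref{thm wu} is simply quoted from \cite[Theorem~1.1]{wu} and then used as an input to prove Theorem~\ref{thm3}. So there is no ``paper's own proof'' to compare against beyond the citation.

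That said, your proposal has a genuine gap: it is circular. You invoke Theorem~\ref{thm3} to obtain the lower bound $\nu^+(K_{p,q}) \ge p\nu^+(K) + \tfrac{(p-1)(q-1)}{2}$, but in this paper Theorem~\ref{thm3} is proved \emph{using} Theorem~\ref{thm wu} (see the proof in Section~5: one twists $K_{p,q}$ up to $K_{p,np+q}$ for large $n$, applies Theorem~\ref{thm wu} there, and then pulls the inequality back via Theorem~\ref{thm2}). So you cannot cite Theorem~\ref{thm3} as an independent ingredient here.

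Beyond the circularity, the remainder of your proposal for the upper bound is a reasonable outline of the strategy Wu actually uses in \cite{wu} (Gordon's cabling--surgery identification, matching $\Spin^c$ structures through the lens-space splitting, and the floor-function bookkeeping that isolates the threshold $q \ge (2\nu^+(K)-1)p-1$), but it remains only an outline: the $\Spin^c$ tracking and the arithmetic verification that $k \ge \nu^+(K)$ are asserted rather than carried out, and you yourself flag this as ``the genuinely delicate point.'' If you want a self-contained argument, you would need to either reproduce Wu's computation in full or supply an independent proof of the lower bound that does not pass through Theorem~\ref{thm3}.
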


In our proof, we deform a given $(p,q)$-cable (with $p>0$) into $(p,np+q)$-cable by $n$ times positive full twists for sufficiently large $n$, and apply Wu's theorem to the $(p,np+q)$-cable.

\def\proofname{Proof of Theorem \ref{thm3}}

\begin{proof}
Without loss of generality, we can assume that $p>0$.
We can easily see that any cable knot $K_{p,q}$ ($p>0$) is deformed into $K_{p,p+q}$ by  a positive full-twist with $p$-linking.
Hence, for any $n>0$, 
by taking $n$ times of such positive full-twists and applying Theorem \ref{thm2},
we have
\begin{equation}
\nu^+(K_{p,q}) \geq \nu^+(K_{p,np+q}) -  \frac{np(p-1)}{2}. 
\end{equation}
Let $n$ be a positive integer satisfying $n \geq 2 \nu^+(K) - 1 + |\frac{q}{p}|$. Then $np+q  \geq (2\nu^+(K)-1)p -1$,
and Theorem \ref{thm wu} gives
\begin{eqnarray}
\nu^+(K_{p,np+q}) &= &p\nu^+(K)+ \frac{(p-1)(np+q-1)}{2} \nonumber \\ 
\ &=& p\nu^+(K)+ \frac{(p-1)(q-1)}{2} + \frac{np(p-1)}{2}.
\end{eqnarray}
By combining the inequality (6) and the equality (7), we obtain
the desired inequality.
\end{proof}

Next we prove Corollary \ref{cor2}.

\def\proofname{Proof of Corollary \ref{cor2}}

\begin{proof}
Since one can construct a slice surface for $K_{p,q}$ from $p$ parallel copies of a 
slice surface for $K$ together with $(p-1)q$ half-twisted bands,
we have the inequality
$$
g_4(K_{p,q}) \leq pg_4(K) + \frac{(p-1)(q-1)}{2}.
$$
On the other hand, the assumption $\nu^+(K) = g_4(K)$ and Theorem \ref{thm3}
imply
$$
g_4(K_{p,q}) \geq \nu^+(K_{p,q}) \geq pg_4(K) + \frac{(p-1)(q-1)}{2}.
$$
These complete the proof.
\end{proof}


\section{A partial order on $\mathcal{C}_{\nu^+}$}

In this section, we prove Proposition \ref{prop partial}  and Theorem \ref{thm partial}.
We first prove Proposition \ref{prop partial}.

We decompose Proposition \ref{prop partial} into the following three lemmas.

\begin{lem}
\label{lem partial1}
The relation $\leq$ is a partial order on $\mathcal{C}_{\nu^+}$.
\end{lem}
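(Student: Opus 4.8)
The plan is to verify the three defining properties of a partial order for $\leq$ directly from the definition, namely reflexivity, antisymmetry, and transitivity, using basic properties of $\nu^+$ and the fact that $\nu^+$-equivalence is already known (by Hom) to be an equivalence relation on $\mathcal{C}_{\nu^+}$.

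For reflexivity, I would observe that $x - x = 0$ in $\mathcal{C}_{\nu^+}$, and $\nu^+$ of the unknot is $0$, so $x \leq x$. For antisymmetry, suppose $x \leq y$ and $y \leq x$; by definition this means $\nu^+(x-y) = 0$ and $\nu^+(y-x) = 0$, which is precisely the definition of $x$ being $\nu^+$-equivalent to $y$; since we are working in $\mathcal{C}_{\nu^+}$, where $\nu^+$-equivalent elements are identified, this gives $x = y$. The one genuine computation is transitivity: assuming $x \leq y$ and $y \leq z$, i.e. $\nu^+(x-y) = 0$ and $\nu^+(y-z) = 0$, I want $\nu^+(x-z) = 0$. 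Here I would invoke subadditivity of $\nu^+$ (Theorem \ref{thm BCG}): writing $x - z = (x-y) + (y-z)$, we get $\nu^+(x-z) \leq \nu^+(x-y) + \nu^+(y-z) = 0$, and since $\nu^+$ is non-negative this forces $\nu^+(x-z) = 0$, i.e. $x \leq z$.

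I do not expect any serious obstacle here; the statement is essentially a formal consequence of subadditivity together with Hom's earlier work. The only point requiring mild care is making sure that the ambient objects are elements of $\mathcal{C}_{\nu^+}$ rather than $\mathcal{C}$, so that antisymmetry genuinely yields equality (in $\mathcal{C}$ one would only get $\nu^+$-equivalence, not equality, which is why the partial order is phrased on the quotient). I would state this explicitly to avoid confusion. After establishing these three properties the lemma is complete, and the remaining structural properties (1)–(3) of Proposition \ref{prop partial} would be handled in the subsequent lemmas.
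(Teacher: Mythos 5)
Your proposal is correct and follows essentially the same approach as the paper: verifying reflexivity via $\nu^+(0)=0$, antisymmetry via the definition of $\nu^+$-equivalence and the fact that we work in the quotient $\mathcal{C}_{\nu^+}$, and transitivity via the subadditivity of $\nu^+$ (Theorem \ref{thm BCG}). No differences of substance.
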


\def\proofname{Proof}

\begin{proof}
To prove Lemma \ref{lem partial1}, it is sufficient to show that the followings hold;
for all $x,y,z \in \mathcal{C}_{\nu^+}$, we have
\begin{enumerate}
\item $x \leq x$,
\item if $x \leq y$ and $y \leq z$, then 
$x \leq z$, and
\item if $x \leq y$ and $y \leq x$, then 
$x = y$.
\end{enumerate}
Since $\nu^+(x-x) = \nu^+(0) = 0$, the condition (1) holds.
We next prove the condition (3). Since the assumptions $x \leq y$ and $y \leq x$
imply that $\nu^+(x-y)=0$ and $\nu^+(y-x)=0$,
representatives of $x$ and $y$ are $\nu^+$-equivalent, and we have $x=y$.

To prove the condition (2), we use Theorem \ref{thm BCG} again.
The assumptions $x \leq y$, $y \leq z$ and Theorem \ref{thm BCG}
imply that
$$
\nu^+(x-z) = \nu^+((x-y) + (y-z)) \leq \nu^+(x-y) + \nu^+(y-z)=0.
$$
This completes the proof.
\end{proof}

\begin{lem}
\label{lem partial2}
For elements $x, y, z \in \mathcal{C}_{\nu^+}$, if $ x \leq y$, then 
$x+z \leq y+z$ and $-y \leq -x$.
\end{lem}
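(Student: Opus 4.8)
The plan is to prove the two claimed monotonicity properties of the partial order $\leq$ directly from the definition together with Theorem~\ref{thm BCG}. Recall that $x \leq y$ means $\nu^+(x-y) = 0$, and that addition in $\mathcal{C}_{\nu^+}$ is just the group operation inherited from $\mathcal{C}$. Both statements should follow by reducing to an inequality on $\nu^+$ of a single group element and then invoking subadditivity. I expect no substantive obstacle here; the only thing to be careful about is the bookkeeping with signs and with the fact that $\nu^+$ is an honest invariant on $\mathcal{C}_{\nu^+}$ (so that writing $\nu^+(x-y)$ is legitimate), which is already justified in the excerpt.

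For the first claim, suppose $x \leq y$, i.e. $\nu^+(x-y)=0$. I would observe that $(x+z)-(y+z) = x-y$ as elements of $\mathcal{C}_{\nu^+}$, since the group is abelian (it is a quotient of the knot concordance group). Hence $\nu^+((x+z)-(y+z)) = \nu^+(x-y) = 0$, which is exactly the statement $x+z \leq y+z$. This requires nothing beyond the group law.

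For the second claim, suppose again $\nu^+(x-y) = 0$. I want to show $-y \leq -x$, i.e. $\nu^+((-y)-(-x)) = 0$. But $(-y)-(-x) = x - y$ in the abelian group $\mathcal{C}_{\nu^+}$, so once more $\nu^+((-y)-(-x)) = \nu^+(x-y) = 0$. So in fact neither half of Lemma~\ref{lem partial2} needs Theorem~\ref{thm BCG} at all; both are immediate from commutativity of $\mathcal{C}_{\nu^+}$. (If one preferred not to invoke commutativity, one could instead note $-y - (-x) = -y + x$ and $x - y = x + (-y)$ are equal for the same reason; there is no way around using that $\mathcal{C}$, hence $\mathcal{C}_{\nu^+}$, is abelian, which is standard.)

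The one point worth a sentence of care is why these manipulations are allowed on $\nu^+$-equivalence classes rather than just on knots: this is precisely the content, cited earlier in the excerpt, that $\nu^+$ descends to a well-defined map on $\mathcal{C}_{\nu^+}$ and that $\mathcal{C}_{\nu^+}$ is a group. Given that, the proof is a two-line computation in each case. If Theorem~\ref{thm BCG} is to be used anywhere it would only be as an alternative route — e.g. $\nu^+((x+z)-(y+z)) \le \nu^+(x-y) + \nu^+(0) = 0$ — but the direct identification of group elements is cleaner, so that is the route I would write up.
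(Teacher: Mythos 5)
Your proof is correct and matches the paper's argument exactly: in both cases one rewrites $(x+z)-(y+z)$ and $(-y)-(-x)$ as $x-y$ using the abelian group structure of $\mathcal{C}_{\nu^+}$ and concludes immediately. The paper likewise does not invoke Theorem~\ref{thm BCG} here, so your remark that subadditivity is unnecessary is consistent with the source.
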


\begin{proof}
Since $\nu^+(x-y)=0$, we have
$$
\nu^+((x+z)-(y+z))=\nu^+(x-y)=0
$$
and
$$
\nu^+((-y)-(-x))= \nu^+ (x-y)=0.
$$
\end{proof}

\begin{lem}
\label{lem partial3}
For coprime integers $p,q>0$, $k \in \mathbb{Z}_{\geq 0}$ and $0 \leq i \leq p-1$,
all of $\tau$, $-\Upsilon$, $V_k$, $\nu^+$
and $-d(S^3_{p,q}(\cdot),i)$ preserve the partial order.
\end{lem}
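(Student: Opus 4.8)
The plan is to reduce everything to two inputs: first, the well-known monotonicity properties of each of the listed invariants under the operation "add a knot with vanishing $\nu^+$", and second, the subadditivity Theorem \ref{thm BCG}. The hypothesis $x \leq y$ means exactly $\nu^+(y - x) = 0$, so there is a representative knot $L$ with $\nu^+(L) = 0$ such that $y = x + [L]_{\nu^+}$. The whole lemma then amounts to: for each invariant in the list, adding a $\nu^+$-trivial knot moves the value in the prescribed direction.

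First I would handle $\nu^+$ itself: by Theorem \ref{thm BCG}, $\nu^+(y) = \nu^+(x + L) \le \nu^+(x) + \nu^+(L) = \nu^+(x)$, so $\nu^+(y) \le \nu^+(x)$, which is what "$\nu^+$ preserves $\leq$" should mean (the partial order on $\mathbb{Z}_{\ge 0}$ being the usual one but with the inequality recorded so that $x \le y$ forces $\nu^+(x) \ge \nu^+(y)$ — I would state the direction carefully in the write-up to match the sign conventions, exactly as $-\Upsilon$ and $-d$ already carry minus signs in the statement). Next, for $V_k$: Proposition \ref{prop2.2} / Corollary \ref{cor2.3} expresses $d(S^3_{p/q}(K),i)$ in terms of a single $V_m(K)$, so it suffices to prove that $V_k$ is subadditive, i.e. $V_k(x+y) \le V_k(x) + V_k(y)$; this is standard and in fact is the engine behind Theorem \ref{thm BCG}, so I would either cite it from \cite{ni-wu} (or \cite{bodnar-celoria-golla}) or note that $V_k(L) = 0$ for all $k$ whenever $\nu^+(L) = 0$ (immediate from the definition of $\nu^+$ as the least $k$ with $V_k = 0$, together with $V_{k+1} \le V_k$), and then apply subadditivity to get $V_k(y) \le V_k(x)$. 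The $d$-invariant of surgeries then follows from Corollary \ref{cor2.3}: since $d(S^3_{p/q}(\cdot),i) = d(S^3_{p/q}(O),i) - 2V_m(\cdot)$ for the appropriate $m = m(i)$, and $V_m(y) \le V_m(x)$, we get $d(S^3_{p/q}(y),i) \ge d(S^3_{p/q}(x),i)$, i.e. $-d$ preserves $\leq$.

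For $\tau$ and $\Upsilon$ I would invoke \cite[Theorem 1]{hom} (already cited in the introduction), which asserts precisely that these invariants are determined by the $\nu^+$-equivalence class and, more to the point, that $\nu^+(L) = 0$ forces $\tau(L) = 0$ and $\Upsilon_L \equiv 0$; combining this with the known subadditivity of $\tau$ and the known (sub/super)additivity-type inequality $\Upsilon_{x+y}(t) \ge \Upsilon_x(t) + \Upsilon_y(t)$ (Upsilon is superadditive, so $-\Upsilon$ is subadditive) yields $\tau(y) \le \tau(x)$ and $-\Upsilon_y(t) \le -\Upsilon_x(t)$ pointwise, which is the meaning of "preserve the partial order" in $\mathbb{Z}$ and in $\Cont([0,2])$ with its pointwise order. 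I would present all six cases in parallel: establish once the principle "$\nu^+(L)=0 \Rightarrow$ the relevant invariant of $L$ vanishes", then apply the matching subadditivity inequality, with the signs arranged so each conclusion reads as order-preservation. The main obstacle is not mathematical depth but bookkeeping: making sure the direction of each inequality is consistent with the chosen partial-order convention (hence the deliberate minus signs on $\Upsilon$ and $d$), and citing the correct additivity statement for each invariant — Upsilon being superadditive rather than subadditive is the one spot where a sign slip is easy, so I would be explicit there.
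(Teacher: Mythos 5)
There are two problems with your proposal, one cosmetic and one substantive.

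First, the sign of the hypothesis. The paper defines $x \leq y$ to mean $\nu^+(x-y)=0$, not $\nu^+(y-x)=0$. Since $\nu^+$ is not symmetric in general, your reformulation ``there is $L$ with $\nu^+(L)=0$ and $y = x + L$'' encodes the wrong condition and reverses every subsequent inequality. For example, you conclude $\nu^+(y) \leq \nu^+(x)$, whereas the paper (correctly, given its convention) obtains $\nu^+(x) \leq \nu^+(y)$ from $\nu^+(x) = \nu^+(y + (x-y)) \leq \nu^+(y) + \nu^+(x-y) = \nu^+(y)$. You flag the sign bookkeeping as a danger spot, and indeed this is exactly where it goes wrong; with the substitution $L = x-y$ (so $x = y + L$ and $\nu^+(L)=0$) the $\nu^+$, $V_k$, and $d$ cases come out correctly and match the paper's argument, which uses subadditivity of $\nu^+$ (Theorem \ref{thm BCG}), the bi-indexed subadditivity $V_{m+n}(x+y) \leq V_m(x)+V_n(y)$ (Proposition \ref{prop BCG}), and Proposition \ref{prop2.2} respectively.

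The substantive gap is in your treatment of $\tau$ and $\Upsilon$. You assert that Hom's Theorem 1 gives ``$\nu^+(L)=0$ forces $\tau(L)=0$ and $\Upsilon_L \equiv 0$.'' That is not what the theorem says: it requires \emph{both} $\nu^+(L)=0$ and $\nu^+(-L)=0$. With only one of these hypotheses the implication is false --- for instance $\nu^+(-T_{2,3})=0$ while $\tau(-T_{2,3})=-1$. So the principle you want to invoke (``$\nu^+(L)=0 \Rightarrow$ the invariant of $L$ vanishes'') simply fails for $\tau$ and $\Upsilon$, and combining it with additivity would in any case give equality rather than an inequality, which cannot be right. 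The paper instead uses the one-sided bounds $\tau(K) \leq \nu^+(K)$ (from \cite{hom-wu}) and $-\Upsilon_K(t) \leq (1-|1-t|)\,\nu^+(K)$ (from \cite[Proposition 4.7]{OSS}), valid for all $K$, applied to $K = x-y$; together with $\tau$ and $\Upsilon$ being homomorphisms this gives $\tau(x)-\tau(y) = \tau(x-y) \leq \nu^+(x-y) = 0$ and likewise for $-\Upsilon$. Those inequalities are the essential missing ingredient in your argument, and without them the $\tau$ and $\Upsilon$ cases do not go through.
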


\begin{proof}
It is proved in \cite{hom-wu} and \cite[Proposition 4.7]{OSS} that
$$
\tau(x) \leq \nu^+(x)
$$
and
$$
-\Upsilon_x (t) \leq (1-|1-t|) \nu^+(x)
$$
for any $x \in \mathcal{C}_{\nu^+}$ and $t \in [0,2]$. 
Since $\tau$ and $\Upsilon$ are group homomorphisms,
if $x \leq y$, then we have
$$
\tau(x) - \tau(y) = \tau(x-y) \leq \nu^+(x-y) = 0 
$$
and
$$
(-\Upsilon_{x}(t)) - (-\Upsilon_y (t)) =-\Upsilon_{x-y} (t) \leq (1-|1-t|) \nu^+(x-y) =0. 
$$
These imply that $\tau(x) \leq \tau(y)$ and $-\Upsilon_x \leq -\Upsilon_y$.
Furthermore, Theorem \ref{thm BCG} implies that if $x \leq y$, then 
$$
\nu^+(x) = \nu^+(y+ (x-y)) \leq \nu^+(y) + \nu^+(x-y) = \nu^+(y).
$$

To consider $V_k$, we use the following proposition.

\begin{prop}[\text{\cite[Proposition 6.1]{bodnar-celoria-golla}}]
\label{prop BCG}
For any two elements $x,y \in \mathcal{C}_{\nu^+}$ and any $m, n \in \mathbb{Z}_{\geq 0}$, 
we have
$$
V_{m+n}(x+y) \leq V_m(x) + V_n(y).
$$
\end{prop}
This proposition implies that if  $x \leq y$, then for any $k \in \mathbb{Z}_{\geq 0}$,
we have
$$
V_{k}(x) = V_{k+0}(y + (x-y)) \leq V_{k}(y) + V_0(x-y)=V_{k}(y).
$$

Finally we consider $-d(S^3_{p,q}(\cdot),i)$.
By Proposition \ref{prop2.2}, we have
$$
-d(S^3_{p/q}(x),i) = -d(S^3_{p/q}(0),i) + 
2V_{\min\{\lfloor \frac{i}{q} \rfloor, \lfloor \frac{p+q -1-i}{q} \rfloor \}}(x).
$$
Hence if  $x \leq y$, then we have
$$
\begin{array}{lll}
-d(S^3_{p/q}(x),i) &= & -d(S^3_{p/q}(0),i) + 
2V_{\min\{\lfloor \frac{i}{q} \rfloor, \lfloor \frac{p+q -1-i}{q} \rfloor \}}(x)\\
\ & \leq &
 -d(S^3_{p/q}(0),i) + 
2V_{\min\{\lfloor \frac{i}{q} \rfloor, \lfloor \frac{p+q -1-i}{q} \rfloor \}}(y)\\
\ & = & -d(S^3_{p/q}(y),i).
\end{array}
$$
This completes the proof.
\end{proof}

\def\proofname{Proof of Proposition \ref{prop partial}}

\begin{proof}
Proposition \ref{prop partial} immediately follows from the above three lemmas. 
\end{proof}

We next prove Theorem \ref{thm partial}.

\def\proofname{Proof of Theorem \ref{thm partial}}

\begin{proof}
First, suppose that $n=0$ or 1.
Then Theorem \ref{thm1} shows $n^+([J]_{\nu^+} - [K]_{\nu^+})= 0$,
and hence $[J]_{\nu^+} \leq [K]_{\nu^+}$.

Next, suppose that $n \geq 3$.
Then Theorem \ref{thm1} shows
$$
n^+([J]_{\nu^+} - [K]_{\nu^+}) \geq \frac{(n-1)(n-2)}{2}>0,
$$
and hence $[J]_{\nu^+} \nleq [K]_{\nu^+}$.

Finally, suppose that $n\geq 3$ and 
the geometric intersection number between $K$ and $D$ is equal to $n$.
Take a small tubular neighborhood of $D$ (denoted by $\nu(D)$),
and think of the intersection $K \cap \nu(D)$ as a trivial braid with index $n$.
Then $J$ is obtained by replacing $K \cap \nu(D)$ with the pure braid $\Delta_n^2$,
where
$$
\Delta_n = (\sigma_1)(\sigma_2 \sigma_1) \cdots (\sigma_n \cdots \sigma_2 \sigma_1)
$$
(see  \cite[Section 10.5]{cromwell}).
In particular, the pure braid $\Delta_n^2$ has only positive crossings (Figure \ref{pos}), and hence it can be 
deformed into the trivial braid only by changing positive crossing to negative crossing 
(Figure \ref{neg}). 
Such a crossing change is realized by a positive full-twist with 0-linking, as shown in
Figure~\ref{change}. 
This implies that $J$ is deformed into $K$ only by positive full-twists 
with 0-linking. Hence, applying the above argument in the case of $n=0$ repeatedly (and transitivity of the partial order),
we have $[K]_{\nu^+} \leq [J]_{\nu^+}$. On the other hand, since $n \geq 3$, we have 
$[J]_{\nu^+} \nleq [K]_{\nu^+}$. Therefore $[J]_{\nu^+} \neq [K]_{\nu^+}$.
\end{proof}

\begin{figure}[htbp]
\begin{minipage}[]{0.4\hsize}
\hspace{-1mm}
\includegraphics[scale = 0.7]{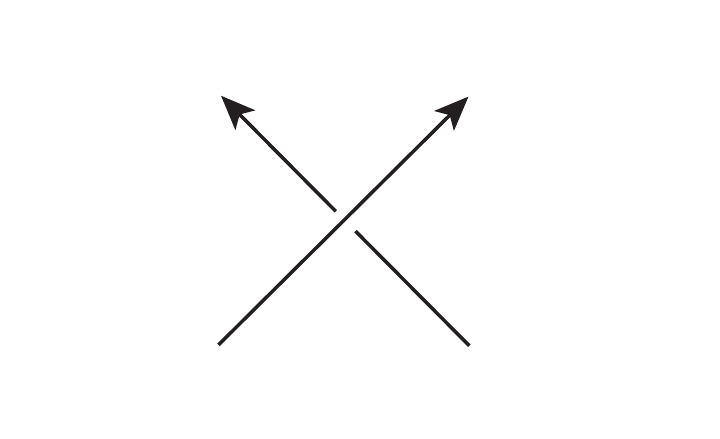}
\vspace{-10mm}
\caption{\label{pos}}
 \end{minipage}
 \begin{minipage}[]{0.4\hsize}
\hspace{-4mm}
\includegraphics[scale = 0.7]{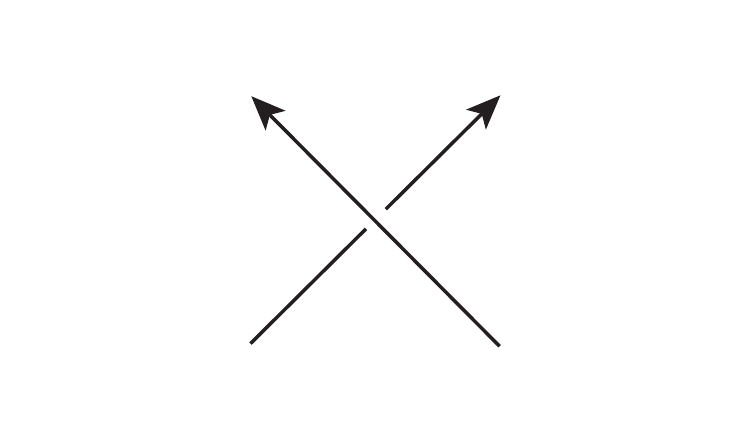}
\vspace{-10mm}
\caption{\label{neg}}
 \end{minipage}
\end{figure}
\vspace{-5mm}

\begin{figure}[htbp]
\hspace{-4mm}
\includegraphics[scale=0.73]{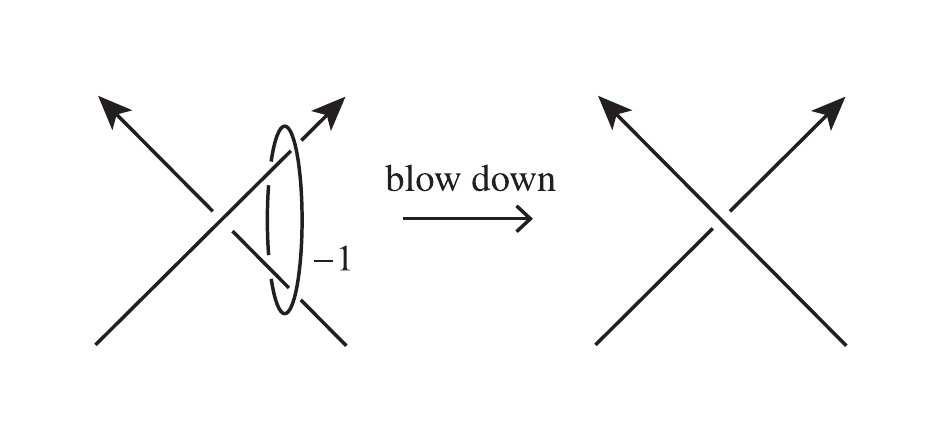}
\vspace{-6mm}
\caption{\label{change}}
\end{figure}

\section{Results on satellite knots}

In this section, we prove Proposition \ref{prop satellite} and 
Corollary \ref{cor satellite}.

\def\proofname{Proof of Proposition \ref{prop satellite}}

\begin{proof}
Let $P$ be a pattern in the standard solid torus $V \subset S^3$.
Note that for any integer $n$, if $P^{n}$ is a pattern obtained from $P$
by performing (-1/n)-surgery along the boundary of the meridian disk of $V$,
then $P^n(K,0)$ is isotopic to $P(K,n)$ for any knot $K$.
Hence we only need to prove that $P_0: [K]_{\nu^+} \mapsto [P(K,0)]_{\nu^+}$ 
is a well-defined order-preserving map for any pattern $P$.
In this proof, we denote $P(K,0)$ simply by $P(K)$.

Let $-P \subset V$ denote the orientation reversed mirror of $P$.
Note that $-(P(K)) = (-P) (-K).$ In order to prove Proposition \ref{prop satellite},
it suffices to prove that for any two knots $K$ and $J$ 
satisfying $V_0(K\#(-J))=0$,  the equality
$$
V_0\big(P(K)\#\big(\text{$-$}P(J) \big) \big)=0
$$
holds. 

By handle calculus, the knot $P(K)\#\big(\text{$-$}P(J)\big)=P(K)\#\big((-P)(-J)\big)$ is 
described as shown in Figure \ref{satellite1}.
Let $C$ denote the (unoriented) center line of $V$ and $\natural$ boundary connected sum.
We start from deforming $P\#(-P)$ into parallel copies of $C\#C$ and in $V\natural V$.

\begin{figure}[htbp]
\hspace{-2mm}
\includegraphics[scale=0.9]{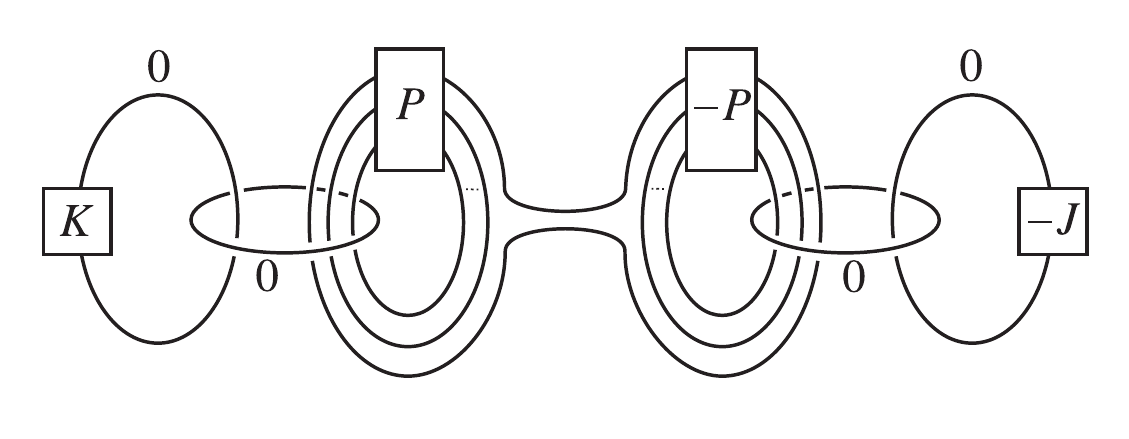}
\vspace{-6mm}
\caption{\label{satellite1}}
\end{figure}

\begin{claim}
For any pattern $P$, there exists an oriented compact surface $S$ properly embedded 
in $(V \natural V) \times [0,1]$ such that 
\begin{enumerate}
\item
$S$ has genus zero,
\item
$S \cap \big( (V \natural V) \times \{0\} \big) = P \# (-P)$, and
\item 
$S \cap \big( (V \natural V) \times \{1\} \big)$ is isotopic to
finitely many parallel copies of $C\#C$ with a certain orientation.
\end{enumerate}  
\end{claim}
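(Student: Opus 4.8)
The plan is to build the surface $S$ by exhibiting an explicit homotopy of the pattern $P\#(-P)$ inside $V\natural V$ that trivializes it down to parallel copies of $C\#C$, and then turn that homotopy into an embedded genus-zero cobordism by a standard trick. First I would fix a basepoint arc realizing the connected sum, so that $P\#(-P)$ is literally the band sum of $P\subset V$ (the first summand) with $-P\subset V$ (the second summand) along an arc crossing the connecting $1$-handle of $V\natural V$. The key geometric input is that $-P$ is the orientation-reversed mirror of $P$: if one slides the mirror copy $-P$ through the connecting handle so that it lies ``on top of'' $P$ but with reversed strand orientations, then $P$ and $-P$ cancel in pairs exactly as $K\#(-K)$ cancels in $B^4$. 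Concretely, writing $P$ as a word in a braid-like diagram inside an annulus cross interval, the union $P\#(-P)$ is isotopic in $V\natural V$ to the ``doubled'' curve obtained from $C$ by the operation that, in the classical setting, bounds an obvious ribbon disk for $P\#(-P)$ as a knot in $S^3$.

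The main steps, in order, are: (1) Choose a diagram of $P$ and present $P\#(-P)\subset V\natural V$ as the numerator-type closure of the tangle $\text{(tangle of }P)\cdot\text{(mirror of that tangle)}$; this exhibits it as a ``ribbon pattern'' analogous to the slice disk for $K\#(-K)$. (2) Reduce the number of crossings of this ribbon pattern one at a time; each crossing removal is a saddle move, which contributes a band (a $1$-handle) to the cobordism $S$. (3) After all crossings are removed, one is left with an unknotted, possibly linked collection of circles inside $V\natural V$, each homologous to $C\#C$ or to a meridian; caps off (minima/maxima) remove the null-homologous meridional components. (4) Track the Euler characteristic and the homology classes throughout: because we never create a component that is non-separating in an unexpected way, and because the total algebraic winding of $P\#(-P)$ around each solid-torus factor is $w(P)-w(P)$ balanced but the \emph{geometric} copies survive, the terminal configuration is precisely $2w(P)$ (or, more carefully, $w(P)$ copies of $C$ in each factor, joined to give parallel copies of $C\#C$) with the claimed orientations; the Euler-characteristic bookkeeping then forces $S$ to have genus zero once we have checked it is connected-to-each-end with no handles, i.e.\ that the number of saddles equals (number of crossings) and the number of caps equals (number of spurious circles), with no excess.

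I expect the genus-zero assertion (item (1) of the Claim) to be the real obstacle: producing \emph{some} cobordism from $P\#(-P)$ to parallel copies of $C\#C$ is easy (any two homologous multicurves in a handlebody cross interval cobound something), but controlling the genus requires that the cobordism be assembled purely from ribbon-type saddles and trivial caps, with no compensating handles. The way I would handle this is to never argue abstractly: I would instead write down the cobordism as a ``movie'' of the pattern in $V\natural V$ in which each frame differs from the previous by a single Reidemeister-III-free crossing-resolving saddle or by a death of an unknot, exactly mimicking the standard movie that exhibits $K\#(-K)$ as slice via the double of a tangle. Since every such movie is by construction planar except at finitely many saddle discs and cap discs, the resulting surface is automatically a union of discs and bands, hence genus zero; connectedness to the two ends is then just the statement that no frame ever makes $P\#(-P)$ disconnect from the final copies of $C$, which is visible from the movie. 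The orientation statement in (3) is read off at the last frame: the two strands of each surviving $C\#C$ inherit opposite orientations from the $P$ and $-P$ halves, so ``a certain orientation'' means the $w(P)$ copies coming from $P$ carry one orientation and the $w(P)$ copies coming from $-P$ carry the reverse, as needed for the subsequent $V_0$-vanishing argument.
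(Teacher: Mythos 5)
Your proposal correctly identifies the central idea: $P\#(-P)$ should bound a ribbon disk inside the ambient $S^1\times[-1,1]\times[-1,1]$ by the standard ``sweep'' construction that exhibits $K\#(-K)$ as ribbon, and intersecting that disk with $V\natural V$ should produce the desired cobordism whose new boundary circles are copies of $C\#C$. This is exactly what the paper does, except that the paper realizes the idea by writing down the disk $R$ explicitly as a map $[0,1]^2\to S^1\times[-1,1]\times[-1,1]$ (essentially sweeping the $P$-half of $P\#(-P)$ in the third coordinate), verifying directly that its self-intersections are ribbon arcs, and then setting $R':=R\cap(V_1\natural V_2)$.

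The genuine gap in your argument is the genus-zero step, which you yourself flag as the crux. The sentence ``Since every such movie is by construction planar except at finitely many saddle discs and cap discs, the resulting surface is automatically a union of discs and bands, hence genus zero'' does not follow: \emph{every} compact surface is a union of discs and bands, and a movie consisting of saddles and caps can perfectly well have positive genus (for example, two saddles taking an unknot to a two-component unlink and back already produce a genus-one cobordism). To conclude genus zero from a movie you would have to do explicit Euler-characteristic bookkeeping --- showing that the number of saddles equals (number of deaths) plus (number of new boundary circles) minus one --- and this depends on the precise order and combinatorics of the moves, which your sketch does not pin down. The paper sidesteps this entirely: $R$ is literally the continuous image of a square, so after resolving the ribbon singularities it is an embedded disk, and $R'$ is that disk with $n$ open subdisks removed (one for each strand of $P$ crossing the ``slab'' where the $1$-handle of $V\natural V$ is missing), hence manifestly a planar surface. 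Similarly, your identification of the top boundary is vaguer than it needs to be --- you speak of capping off ``null-homologous meridional components,'' but in the correct construction no such capping is needed: the $n$ new boundary curves are precisely $\partial\big([-\varepsilon/2,\varepsilon/2]\times\{0\}\times[-1/2,1/2]\big)$, which one checks directly are isotopic to $C\#C$ in $V\natural V$. If you replace the movie heuristic with the explicit parametrized disk, both the genus count and the boundary identification become transparent.
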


\def\proofname{Proof}

\begin{proof}
Identifying $S^1$ with $\Bbb{R}/ \Bbb{Z}$ and
$V$ with $S^1 \times [-1,1] \times [1/2,1]$ respectively.
After an isotopy of $P$,
we can regard $P$ as an proper embedding $P:S^1 \to V$ such that
for small $\varepsilon >0$,
\begin{enumerate}
\item
the projection of $P(S^1)$ on $S^1 \times [-1,1] \times \{ 1/2 \}$ is a regular projection, 
\item
$P(S^1) \cap \big( [-\varepsilon, \varepsilon] \times [-1,1] \times [1/2,1] \big)
= [-\varepsilon, \varepsilon] \times \{ (s_i, t_i ) \}_{i=1}^{n}$ for some $n \in \Bbb{Z}_{>0}$,
\item
$P(t) = (t, 0, 3/4)$ for any $t \in [1/4,3/4]$, and
\item 
$P^{-1}\big( [1/4,3/4] \times \{0\} \times [1/2,3/4) \big)
= \emptyset$.
\end{enumerate}  
Roughly speaking, the above conditions mean
\begin{enumerate}
\item
$P(S^1) \subset S^1 \times [-1,1] \times \{ 1/2 \}$ can be seen as a knot diagram, 
\item
$P(S^1) \cap \big( [-\varepsilon, \varepsilon] \times [-1,1] \times [1/2,1] \big)
$ is a trivial braid,
\item
$P$ contains a half of the center line $[1/4,3/4] \times \{0\} \times \{3/4\}$, and
\item 
$[1/4,3/4] \times \{0\} \times \{3/4\}$ does not contain the over pass for
any crossing on the diagram derived from the condition (1)
\end{enumerate}  
respectively.
Next, for two copies $V_i$ of $V$ ($i=1,2$), we consider a diffeomorphism 
$$f:V_1 \natural V_2 \to \big( S^1 \times [-1,1] \times [-1,1] \big)
\setminus \big( (-\varepsilon/2, \varepsilon/2) \times [-1,1] \times (-1/2, 1/2) \big)$$
so that 
$$
f|_{V_1}(r,s,t) = (r,s, t)
$$
and
$$
f|_{V_2}(r,s,t) = (r,s,-t )
$$
respectively. In particular,
$V_1$ and $V_2$ are identified with
$$
S^1 \times [-1,1] \times [1/2, 1]
$$
and
$$
S^1 \times [-1,1] \times [-1,-1/2]
$$
respectively.
Here, boundary connected sum $V_1 \natural V_2$ is thought of 
as a disjoint union $V_1 \amalg V_2$
with the 1-handle 
$$h^1:=\big(S^1 \setminus (-\varepsilon/2, \varepsilon/2) \big)\times [-1,1] \times [-1/2, 1/2].$$
(See Figure \ref{V}. Note that $V_1\natural V_2$ is the complement of the yellow region.)

\begin{figure}[htbp]
\hspace{-2mm}
\includegraphics[scale=0.8]{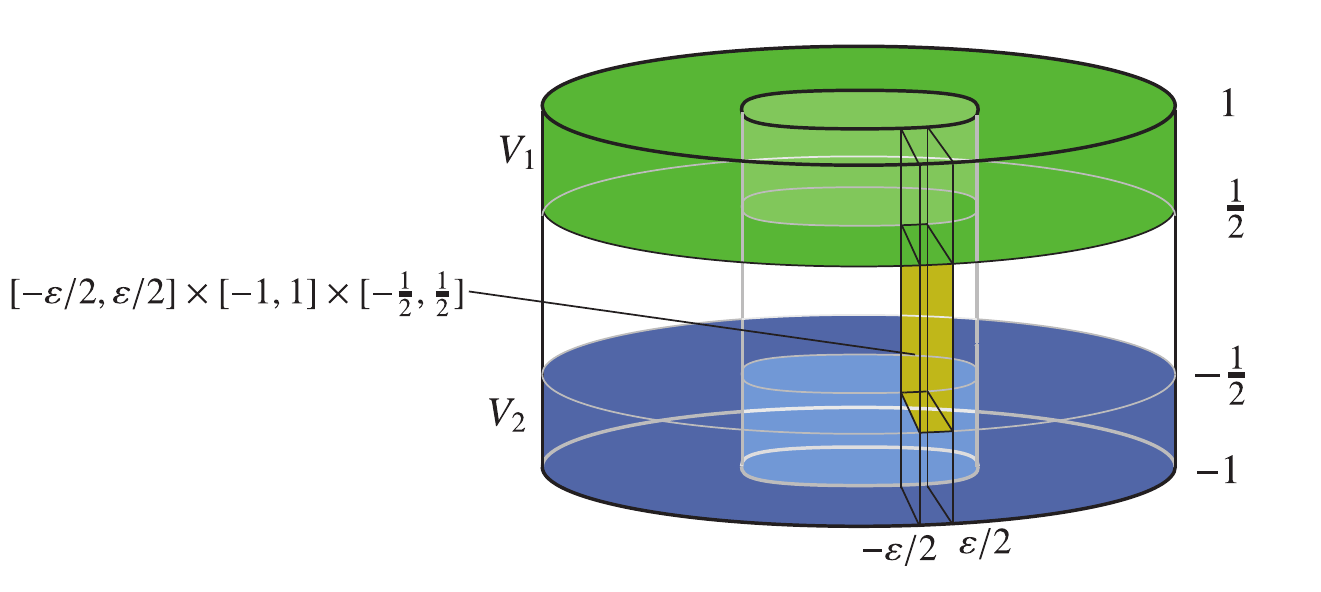}
\vspace{-6mm}
\caption{\label{V}}
\end{figure}

Then, the embedded circle $P\#(-P): S^1 \to V_1 \natural V_2$ is described as 
$$
\big( P\#(-P) \big) (t) = 
\left\{
\begin{array}{lll}
(f|_{V_1}) \circ P\left( -\frac{1}{4} + 2(t +\frac{1}{4}) \right)&\ &
\left(-\frac{1}{4} \leq t \leq 0\right)\\
\ &\ &\ \\
\left(\frac{1}{4}, 0,  \frac{3}{4} - 6t \right)&& \left(0 \leq t \leq \frac{1}{4} \right)\\
\ &\ &\ \\
(f|_{V_2}) \circ P\left( \frac{1}{4} -2(t - \frac{1}{4})  \right)&\ &
\left( \frac{1}{4} \leq t \leq \frac{1}{2} \right)\\
\ &\ &\ \\
\left(-\frac{1}{4}, 0,  -\frac{3}{4} + 6(t - \frac{1}{2}) \right)&& 
\left(\frac{1}{2} \leq t \leq \frac{3}{4} \right)\\
\end{array}
\right. 
.
$$ 
(Note that $(f|_{V_1}) \circ P(S^1)$ and $(f|_{V_2}) \circ P (-S^1)$
are connected by the band $[1/4,3/4] \times [-3/4,3/4]$.  )
Now, we can see that $P\#(-P)$ bounds a ribbon disk in 
$S^1 \times [-1,1] \times [-1,1]$,
which is defined so that 
$$
p_i \circ R(s,t) = p_i \circ \big(P\# (-P) \big) \Big(-\frac{1}{4}(1-t) \Big) 
\ \Big( (s,t) \in [0,1] \times [0,1] \Big)
$$
for $i=1,2$, and
$$
p_3 \circ R(s,t) = p_3 \circ \big(P\# (-P) \big) \Big(-\frac{1}{4}(1-t) \Big) \cdot (2s-1)
\ \Big( (s,t) \in [0,1] \times [0,1] \Big),
$$
where $p_i$ denotes the $i$-th projection of $S^1 \times [-1,1] \times [-1,1]$.
Indeed, we can verify concretely that the boundary of $R$ is equal to
$P\#(-P)$. Moreover, 
any singularity of $R$ is contained in  $\{r\}\times\{s\} \times (-1,1)$,
where $(r,s)$ is the coordinate of a double point on the regular projection $(p_1 \times p_2) \circ P(S^1)$.
Let $1/2 <t_1<t_2<1$ be the 3rd coodinate of points in 
$\big((p_1 \times p_2)^{-1}(r,s) \big) \cap P(S^1)$.  
Then the singularity of $R$ in $\{r\}\times\{s\} \times (-1,1)$ is
equal to $\{r\}\times\{s\} \times [-t_1,t_1]$, which is contained in 
$\{r\}\times\{s\} \times [-t_2,t_2] \subset R$.
Therefore, any singularity of $R$ is ribbon.

Let $R' :=R \cap V_1\natural V_2$.
Then $\partial R$ consists of $P\# (-P)$ and $n$ parallel copies of
$\partial([-\varepsilon/2,\varepsilon/2]\times \{0\} \times [-1/2, 1/2])$
(with a certain orientation).
It is not hard to see that 
$\partial([-\varepsilon/2,\varepsilon/2]\times \{0\} \times [-1/2, 1/2])$
is isotopic to the connected sum of the longitude of $V$ in $\partial(V \natural V)$,
and hence it is isotopic to $C\#C$ in $V \natural V$.
Since $R'$ is a ribbon surface, we can construct a cobordism
$S$ in $(V_1\natural V_2 )\times [0,1]$ from $P\#(-P)$ to $n$ times parallel
copies of $C \# C$, which is homeomorphic to $R'$.
\end{proof}

\begin{figure}[htbp]
\hspace{-2mm}
\includegraphics[scale=0.8]{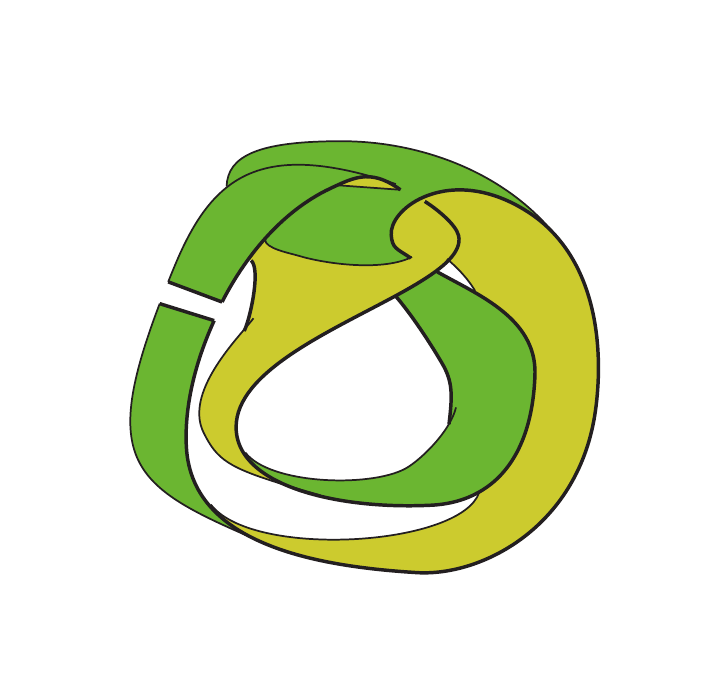}
\vspace{-6mm}
\caption{\label{R} A ribbon disk $R$ for the Whitehead double}
\end{figure}

For a knot $K$ in $S^3$,
let $W_0(K)$ denote the 4-manifold obtained by attaching a 0-framed 2-handle to
$S^3 \times [0,1]$ along $K \subset S^3 \times \{1\}$.

\begin{claim}
The knot $-\big(P(K) \# \big(\text{$-$}P(J) \big)\big) \subset -(S^3 \times \{ 0\})$ bounds
a disk in the 4-manifold $W_0(K\#(-J))$.
\end{claim}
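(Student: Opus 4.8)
The goal is to show that $-\big(P(K)\#(\text{$-$}P(J))\big)$ bounds a disk in $W_0(K\#(-J))$. First I would set $L := K\#(-J)$, so that $W_0(L)$ is $S^3\times[0,1]$ with a $0$-framed $2$-handle along $L\subset S^3\times\{1\}$. The key input is the surface $S$ from the previous Claim: an oriented genus-zero surface in $(V\natural V)\times[0,1]$ cobounding $P\#(-P)$ and $w$ parallel copies of $C\#C$, where $C$ is the core of $V$ and $w$ is the (common) winding number. The strategy is to \emph{push this pattern cobordism into $S^3$} using the satellite embedding determined by $L$, and then cap off the $w$ copies of $C\#C$ inside the $2$-handle.

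\emph{Step 1: turn $S$ into a cobordism in $S^3\times[0,1]$.} Embed $V\natural V$ as a tubular neighborhood of $L = K\#(-J)$ in $S^3$, untwisted (so the longitude of $V\natural V$ goes to the Seifert framing, equivalently to the $0$-framing). Then $P\#(-P)\subset V\natural V$ becomes exactly $P(K)\#(\text{$-$}P(J))$ in $S^3$ (using $-(P(K)) = (-P)(-K)$ and the fact that connected sum of the ambient knots corresponds to $\natural$ of the solid tori), while the $w$ parallel copies of $C\#C$ become $w$ parallel pushoffs of $L$, each using the $0$-framing. Crossing with $[0,1]$, the surface $S$ gives a genus-zero cobordism $\widetilde S$ in $S^3\times[0,1]$ from $P(K)\#(\text{$-$}P(J))$ (at level $0$) to the $w$-component link $\widetilde C$ of $0$-framed parallel copies of $L$ (at level $1$).

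\emph{Step 2: cap off inside the $2$-handle.} In $W_0(L)$, the $0$-framed $2$-handle is attached along $L$; its cocore disk has boundary a meridian, and more to the point, the belt/core region lets one see that the $w$ parallel $0$-framed pushoffs of $L$ bound $w$ disjoint parallel copies of the core disk of the $2$-handle (since the $2$-handle is attached with exactly the $0$-framing, the parallel pushoffs are isotopic in the handle to parallel copies of $\partial(\text{core})$, hence bound parallel core disks). Gluing these $w$ disjoint disks to $\widetilde S$ along $\widetilde C$ produces a properly embedded genus-zero surface in $W_0(L)$ with a single boundary component $P(K)\#(\text{$-$}P(J))$ — i.e.\ a disk. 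Reversing orientation of everything (and noting $W_0(L)$ built on $S^3\times\{1\}$ has the other end $-(S^3\times\{0\})$ as stated) gives the claim as phrased.

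\emph{Main obstacle.} The delicate point is Step 2: verifying that the $w$ parallel copies of $C\#C$, once carried into $S^3$ as pushoffs of $L$ with the \emph{$0$-framing}, genuinely bound $w$ \emph{disjoint} disks in the $2$-handle, and that these can be attached to $\widetilde S$ without introducing intersections or genus. This is where the choice of $0$-framing for the satellite embedding (matching the framing of $W_0$) is essential — any other framing would leave residual twisting and the pushoffs would be cables rather than handle cores. One should also be slightly careful that the "parallel copies of $C\#C$" in the Claim come with the orientation inherited from $S$, and that this matches the orientation of the parallel pushoffs of $L$ so that the capping is orientation-coherent; since $S$ is oriented and genus zero, and the core disks are oriented copies of a single disk, a count of boundary components and an Euler characteristic computation ($\chi = w - 0$ after capping the $w$ circles, with one boundary circle remaining, forcing genus $0$) confirms the result is a disk. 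Everything else is routine handle calculus and isotopy, matching Figure \ref{satellite1}.
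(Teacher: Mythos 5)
Your proposal matches the paper's proof of this claim: both push the Claim~1 cobordism into $S^3\times[0,1]$ via the natural embedding of $V\natural V$ as a neighborhood of $K\#(-J)$ compatible with the $0$-framed satellite construction, observe that the far end becomes the $(n,0)$-cable of $K\#(-J)$ (i.e.\ $n$ parallel $0$-framed pushoffs), and cap it off with $n$ parallel copies of the core of the $0$-framed $2$-handle to obtain the desired disk. One small correction: the number of parallel copies of $C\#C$ produced in Claim~1 is the \emph{geometric} intersection number of $P$ with the meridian disk, not the winding number $w(P)$ (which may be zero even when the ribbon construction forces several extra boundary circles), though this mislabeling does not affect the capping argument.
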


\def\proofname{Proof}

\begin{proof}
We can extend the embedding of $P\#(-P)$ shown in Figure \ref{satellite1} to $V \natural V$,
as shown in Figure \ref{satellite2}.
By using the cobordism in Claim 1, we have
a cobordism in $S^3 \times [0,1]$ with genus zero 
which connects $P(K) \# \big(\text{$-$}P(J)\big)$ to the link shown in Figure \ref{satellite3}.
Furthermore, it follows from elementary handle calculus that the link in Figure \ref{satellite3}
is isotopic to the $(n,0)$-cable of $K\# (-J)$ for some positive integer $n$.
By attaching a 0-framed 2-handle along $K\#(-J) \subset S^3 \times \{ 1 \}$
and capping off the cable $(K\#(-J))_{n,0}$ with $n$ parallel copies of the core of the 2-handle,
we obtain a disk in $W_0(K\#(-J))$ with boundary $-\big(P(K) \# \big(\text{$-$}P (J) \big)\big) \subset -(S^3 \times \{ 0\})$.
\end{proof}

\begin{figure}[htbp]
\hspace{-2mm}
\includegraphics[scale=0.9]{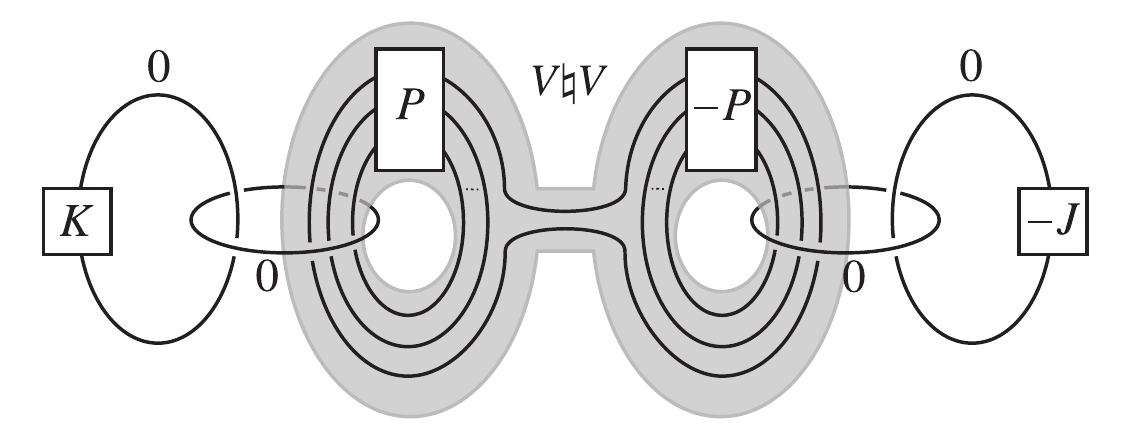}
\vspace{-6mm}
\caption{\label{satellite2}}
\hspace{-2mm}
\includegraphics[scale=0.9]{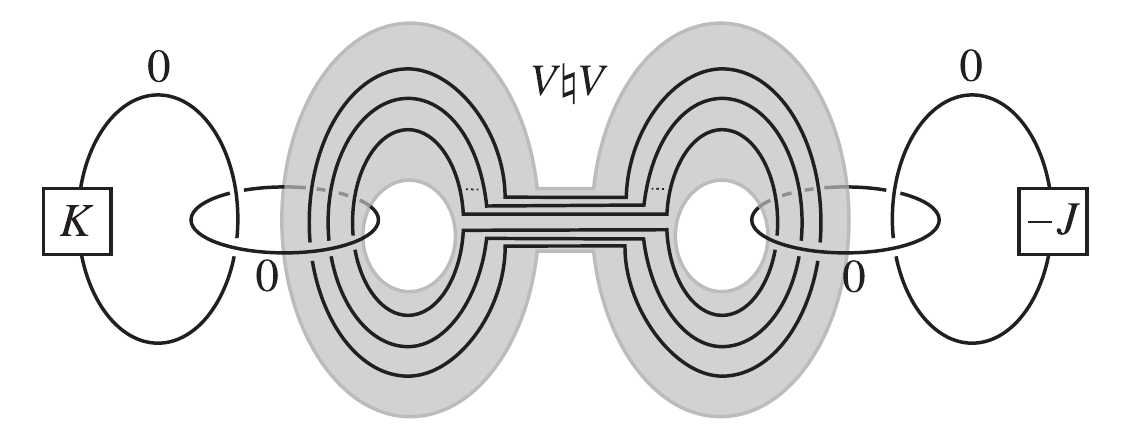}
\vspace{-6mm}
\caption{\label{satellite3}}
\end{figure}

Attach a $(-1)$-framed 2-handle to $W_0(K)$ along $-\big(P(K) \# \big((-P (-J) \big)\big) \subset -(S^3 \times \{ 0\})$, and cap off the disk in Claim 2 with the core.
Then the self-intersection of the resulting sphere is $-1$, and hence we can blow down the sphere. Let $W$ denote the resulting cobordism. Then we can see that 
$$\partial W = -S^3_1\big(P(K) \# \big(\text{$-$}P(J) \big)\big) \amalg S^3_0(K \# (-J)).$$
Taking a properly embedded arc in $W$ from 
$-S^3_1\big(P(K) \# \big(\text{$-$}P (J)\big)\big)$ to $S^3_0(K \# (-J))$ and
removing the small open tubular neighborhood of the arc,
we obtain a compact oriented 4-manifold $X$ with boundary 
$-S^3_1\big(P(K) \# \big(\text{$-$}P(J) \big)\big) \# S^3_0(K \# (-J))$. Moreover,
it follows from elementary homology theory that 
$$H_*(X;\Bbb{Z}) \cong H_*(S^2 \times D^2; \Bbb{Z}).$$
Now we apply the following theorem to $-X$.

\begin{thm}[\text{\cite[Corollary 9.13]{ozsvath-szabo}}]
\label{s2d2}
Suppose that $Y$ is a 3-manifold with $H_1(Y;\Bbb{Z}) \cong \Bbb{Z}$.
If $Y$ bounds an integer homology $S^2 \times D^2$, then
$d_{-1/2}(Y)\geq -1/2$.
\end{thm}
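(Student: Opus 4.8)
The plan is to run the standard Ozsv\'ath--Szab\'o correction-term argument --- the one behind their negative-definite bound --- for the cobordism obtained from $W$ by removing a ball, adapted to the fact that $Y$ has $b_1=1$. Write $\mathfrak{s}_0$ for the torsion $\Spin^c$ structure on $Y$, and recall that for $b_1(Y)=1$ one has $\image\big(HF^\infty(Y,\mathfrak{s}_0)\to HF^+(Y,\mathfrak{s}_0)\big)\cong\mathcal{T}^+_{d_{1/2}(Y)}\oplus\mathcal{T}^+_{d_{-1/2}(Y)}$, the two summands being the usual towers based in gradings $d_{1/2}(Y)\equiv\tfrac12$ and $d_{-1/2}(Y)\equiv-\tfrac12\pmod2$; this uses that $HF^\infty(Y,\mathfrak{s}_0)$ is standard, i.e.\ isomorphic to $\Lambda^*H^1(Y;\mathbb{Z})\otimes\mathbb{Z}[U,U^{-1}]$, which holds when $b_1=1$.

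First I would pin down the $\Spin^c$ structure on $W$ to use. Since $W$ is an integral homology $S^2\times D^2$, it has $H_1(W;\mathbb{Z})=0$ and $H_2(W;\mathbb{Z})\cong\mathbb{Z}$ generated by a class of square zero, so $w_2(W)=0$ and $W$ carries a unique spin structure $\mathfrak{s}$; moreover the restriction $H^2(W;\mathbb{Z})\to H^2(Y;\mathbb{Z})$ is an isomorphism (both are $\mathbb{Z}$, and its kernel is the image of the intersection form, which vanishes), so $\mathfrak{s}$ is the only $\Spin^c$ structure on $W$ restricting to $\mathfrak{s}_0$, and $c_1(\mathfrak{s})=0$. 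Deleting an open ball, regard $W_1:=W\setminus\Int B^4$ as a cobordism from $S^3$ to $Y$; then $\chi(W_1)=\chi(W)-1=1$ and $\sigma(W_1)=0$, so $F_{W_1,\mathfrak{s}}$ shifts the absolute $\mathbb{Q}$-grading by $\tfrac14\big(c_1(\mathfrak{s})^2-2\chi(W_1)-3\sigma(W_1)\big)=-\tfrac12$.

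The heart of the argument is that $F^\infty_{W_1,\mathfrak{s}}\colon HF^\infty(S^3)\to HF^\infty(Y,\mathfrak{s}_0)$ is nontrivial. As $b_2^+(W_1)=0$, this is the analogue of the Ozsv\'ath--Szab\'o nontriviality statement for $HF^\infty$ cobordism maps; the only new feature is that the target has $b_1=1$ (the textbook statements assume $b_1=0$ on the boundary), which I would handle by composing $W_1$ with a single $2$-handle cobordism attached along a knot generating $H_1(Y)$, the framing chosen so the new boundary has $b_1=0$ while the composite still has $b_2^+=0$, thereby reducing to the known case --- a nontrivial composite forces $F^\infty_{W_1,\mathfrak{s}}$ nontrivial. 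Granting this, the grading shift of $-\tfrac12$ forces $\image F^\infty_{W_1,\mathfrak{s}}$ into the $(-\tfrac12)$-tower of $HF^\infty(Y,\mathfrak{s}_0)$, and, working over $\mathbb{Q}$ (where a nonzero $U$-equivariant map into a free rank-one tower in a fixed grading surjects), $\image F^\infty_{W_1,\mathfrak{s}}$ is exactly that tower.

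To finish, I would use that cobordism maps commute with $HF^\infty\to HF^+$. By grading parity the image of the $(-\tfrac12)$-tower of $HF^\infty(Y,\mathfrak{s}_0)$ in $HF^+(Y,\mathfrak{s}_0)$ is precisely $\mathcal{T}^+_{d_{-1/2}(Y)}$; on the other hand it equals $F^+_{W_1,\mathfrak{s}}\big(\image(HF^\infty(S^3)\to HF^+(S^3))\big)=F^+_{W_1,\mathfrak{s}}(\mathcal{T}^+_0)$, every element of which lies in grading $\ge 0+(-\tfrac12)=-\tfrac12$. Hence the bottom grading of $\mathcal{T}^+_{d_{-1/2}(Y)}$, namely $d_{-1/2}(Y)$, is at least $-\tfrac12$. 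The main obstacle is the nontriviality of $F^\infty$ into a $b_1=1$ target; a secondary point to get right is that the inequality comes out as ``$\ge$'' --- this relies on the fact that \emph{all} of the relevant part of $\image(HF^\infty(Y,\mathfrak{s}_0)\to HF^+(Y,\mathfrak{s}_0))$ lies above the grading shift, not merely that $F^+_{W_1,\mathfrak{s}}$ of the bottom class does.
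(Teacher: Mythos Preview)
The paper does not prove this statement; it is quoted as \cite[Corollary 9.13]{ozsvath-szabo} and invoked as a black box inside the proof of Proposition~\ref{prop satellite}. There is therefore no in-paper argument to compare against.

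Viewed as a reconstruction of the Ozsv\'ath--Szab\'o argument, your outline is sound: the computation of the degree shift $-\tfrac12$, the observation that the image of $F^\infty_{W_1,\mathfrak{s}}$ lands in the $(-\tfrac12)$-congruence tower, the passage to $\mathbb{Q}$-coefficients to promote ``nonzero'' to ``surjective onto the tower'', and the final grading comparison are all correct and give the inequality in the right direction. The one step that is not routine is the nontriviality of $F^\infty_{W_1,\mathfrak{s}}$ when the outgoing end has $b_1=1$; your proposed fix (compose with a further $2$-handle killing $H_1$ and keep $b_2^+=0$) is plausible but requires checking that the composite really has $b_2^+=0$ and that the composition formula does not introduce cancellations over the extra $\Spin^c$ structures. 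In \cite{ozsvath-szabo} this is handled directly via their structure results for $HF^\infty$ and cobordism maps in the $b_1>0$ setting rather than by such a reduction, so if you want a self-contained write-up it would be cleaner to cite those results explicitly.
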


By Theorem \ref{s2d2} and Corollary \ref{cor2.3}, we have
$$
\begin{array}{lll}
-1/2 & \leq 
& d_{-1/2}(-S^3_0(K \# (-J))) + d\big(S^3_1\big(P(K) \# \big(\text{$-$}P(J) \big)\big) \big)\\
\ &=& \  -1/2 + d(S^3_{-1}(J \# (-K)))  -2V_0 \big(P(K) \# \big(\text{$-$}P(J)\big)\big)\\
\ &=& \  -1/2 + 2V_0(K \# (-J)))  -2V_0\big(P(K) \# \big(\text{$-$}P(J)\big)\big).
\end{array}
$$
Since $V_0(K \# (-J))=0$, we have
$$
0 \leq V_0\big(P(K) \# \big(\text{$-$}P(J)\big)\big) \leq V_0(K \# (-J))=0
$$ 
and hence 
$
V_0\big(P(K) \# \big(\text{$-$}P(J)\big)\big)=0$.
\end{proof}

Finally, we prove Corollary \ref{cor satellite}.

\def\proofname{Proof of Corollary \ref{cor satellite}}

\begin{proof}
If $D$ denotes the image of the meridian disk of $V$ by the embedding $e_n$,
then it is easy to see that $D$ intersects $P(K,n)$ in its interior, $\lk(P(K,n),D)=w(P)$
and $P(K,n)$ is deformed into $P(K,n+1)$ by a positive full-twist along $D$. 

We first suppose that $w(P) = 0$ or 1.
Let $x \in \mathcal{C}_{\nu^+}$, the symbol $K$ denote a representative of $x$ and $m,n$ integers with $m<n$.
Then, by applying Theorem \ref{thm partial} to the pair $(P(K,m),D)$ repeatedly,
we have 
$$
P_m(x) = [P(K,m)]_{\nu^+} \geq [P(K,m+1)]_{\nu^+} \geq \ldots 
[P(K,n)]_{\nu^+}=P_{n}(x).
$$

Next we suppose that $w(P) \geq 3$ and the geometric intersection number between the pattern $P$ and the meridian disk of $V$ is equal to $w(P)$. Then the embedding $e_n$ preserves the number of intersection points, and hence the geometric intersection number between $D$ and $P(K,n)$ is also equal to $n$. By applying Theorem \ref{thm partial} to
the pair $(P(K,n),D)$ repeatedly, we have
$$
P_m(x) = [P(K,m)]_{\nu^+} < [P(K,m+1)]_{\nu^+} <
[P(K,n)]_{\nu^+}=P_{n}(x).
$$
This completes the proof.
\end{proof}

\end{document}